\newtheorem{theorem}{Theorem}[section]
\newtheorem{lemma}[theorem]{Lemma}
\newtheorem{assumption}[theorem]{Assumption}
\newtheorem{definition}[theorem]{Definition}
\newtheorem{example}[theorem]{Example}
\newtheorem{remark}[theorem]{Remark}
\newcommand{\Tr}{\mathop{\mathrm{Tr}}}
\renewcommand{\d}{\/\mathrm{d}\/}
\def\s{^{\star}}
\def\st{\tilde{\sigma}}
\def\u{u^{n, \varepsilon}}
\def\ue{u^{\varepsilon}}
\def\uv{u^{\varepsilon}_{v}}
\def\n{u_{v_n}}
\def\m{w_{v_n}}
\def\uve{u^{\varepsilon}_{{v}^{\varepsilon}}}
\def\w{w^{\varepsilon}_{{v}^{\varepsilon}}}
\def\ve{v^{\varepsilon}}
\def\we{w^{\varepsilon}}
\def\e{\varepsilon}
\def\sni{\smallskip\noindent}
\begin{document}

\title{Large Deviations for the Stochastic Shell Model of Turbulence}

\author[U. Manna, S. S. Sritharan and P. Sundar]
{U. Manna\\ Max Planck Institute for Mathematics in the Sciences\\ Leipzig, Germany\\manna@mis.mpg.de
 \\ \\
S.S. Sritharan\\ Department of Mathematics\\ University of Wyoming\\Laramie, USA\\sri@uwyo.edu \\ \\
and\\ \\ P. Sundar \\Department of Mathematics\\ Louisiana State University\\
Baton Rouge, USA\\sundar@math.lsu.edu}

\maketitle
\begin{abstract}
In this work we first prove the existence and uniqueness of a strong solution to stochastic GOY model of turbulence with a small multiplicative noise.
 Then using the weak convergence approach, Laplace principle for solutions of the stochastic GOY model is established in certain Polish space.
 Thus a Wentzell-Freidlin type large deviation principle is established utilizing certain results by Varadhan and Bryc.
\end{abstract}
\sni Subject class[2000]: Primary 60F10; Secondary 60H15, 76D03,
76D06.

\sni Keywords: GOY model, Large deviations, Local monotonicity.

\section{Introduction}
The large deviations theory is among the most classical areas in
probability theory with many deep developments and applications (see
Dembo and Zeitouni \cite{DZ}, Deuschel and Stroock \cite{DeS},
Dupuis and Ellis \cite{DE}, Freidlin and Wentzell \cite{FW}, Stroock
\cite{St}, Varadhan \cite{Va}). Although it appears to be no
literature on the large deviation results for stochastic shell model
of turbulence, a few authors have proved the Wentzell-Freidlin type
large deviations for the two dimensional stochastic Navier-Stokes
equations with additive noise (e.g. Chang \cite{Ch1}) and also for
multiplicative noise (e.g. Sritharan and Sundar \cite{SS2}). For
Donsker-Varadhan type large deviation study related to Navier-Stokes
equations we refer the readers to Quastel and Yau\cite{Qa}. For the treatment related to stochastic two-dimensional vorticity equations see the work of Amirdjanova and Xiong \cite{Am}. Several authors have established the Wentzell-Freidlin type large deviation
estimates for a class of infinite dimensional stochastic
differential equations (see for eg., Budhiraja and Dupuis
\cite{BD1}, Chow \cite{Ch2}, Da Prato and Zabczyk \cite{DaZ},
Kallianpur and Xiong \cite{KX}, Sowers \cite{So}). In these works
the proofs of large deviation principle (LDP) (see Definition
\ref{LDP} below) usually rely on first approximating the original
problem by time-discretization so that LDP can be shown for the
resulting simpler problems via contraction principle, and then
showing that LDP holds in the limit. The discretization method to
establish LDP was introduced by Wentzell and Freidlin\cite{FW}.

Dupuis and Ellis \cite{DE} have combined weak convergence methods to
the stochastic control approach developed earlier by Fleming
\cite{Fl} to the large deviations theory. Our work is based on the
theory introduced by Budhiraja and Dupuis \cite{BD1}, where they
used the stochastic control and weak convergence approach to obtain
the LDP for the family $\{\mathcal{G}^{\e}(W(\cdot))\}_{\e >0}$,
where $\mathcal{G}^{\e}$ is an appropriate family of measurable maps
from the Wiener space to some Polish space. Their work relied on the
fact that the LDP is equivalent to Laplace principle (see Definition
\ref{LP1} below) if the underlying space is Polish, which is in turn
a consequence of Varadhan's Lemma ( see Lemma \ref{VL} below) and
Bryc's converse to Varadhan's Lemma (see Lemma \ref{BL} below). We
refer the reader to Theorems 1.2.1 and 1.2.3 in Dupuis and Ellis
\cite{DE}.

In the next Section, we give some definitions and basic properties
from the large deviation theory. In later part of this section, we
describe briefly the work of Budhiraja and Dupuis \cite{BD1} to set
up the ground for our main work. In Section 3, we formulate the
abstract stochastic GOY model when the noise coefficient is small.
We then prove certain a priori energy estimates with exponential
weight. These estimates together with the local monotonicity
property of the sum of the linear and non linear operators play a
fundamental role to prove the existence and uniqueness of the strong
solution. In the last Section, we establish the LDP for the
stochastic GOY model perturbed by a small multiplicative noise.

\section{Large Deviation Principle}
\setcounter{equation}{0} In this section we will give an abstract
formulation and basic properties for a class of large deviation
problems. Let us denote by $X$ a complete separable metric space and
$\{P_{\varepsilon}: \varepsilon > 0\}$ a family of probability
measures on the Borel subsets of $X$.
\begin{definition}
A function $I : X\rightarrow [0, \infty]$ is called a \emph{rate function} if $I$ is lower semicontinuous.
A rate function $I$ is called a \emph{good rate function} if for arbitrary $M \in [0, \infty)$, the level set $K_M = \{x: I(x)\leq M\}$
is compact in $X$
\end{definition}
\begin{definition}\label{LDP}(Large Deviation Principle)
 We say that a family of probability measures $\{P_{\varepsilon}\}$ satisfies the \emph{large deviation principle} (LDP) with a good rate
 function $I$ satisfying,
\begin{enumerate}
\item[(i)] for each closed set $F\subset X$
$$ \limsup_{\varepsilon\rightarrow 0}\ \varepsilon\ \log P_{\varepsilon}(F) \leq -\inf_{x\in F} I(x),$$
\item[(ii)] for each open set $G\subset X$
$$ \liminf_{\varepsilon\rightarrow 0}\ \varepsilon\ \log P_{\varepsilon}(G) \geq -\inf_{x\in G} I(x).$$
\end{enumerate}
\end{definition}
\begin{remark}
For any given $\{P_{\varepsilon}: \varepsilon > 0\}$ there is at most one rate function governing the large deviations of $\{P_{\varepsilon}: \varepsilon > 0\}$.
\end{remark}
\begin{example}
As a simplest example we recall the one dimensional version of Cram\'{e}r's theorem~\cite{Va}. Let $X_n = (Y_1 + Y_2 + \cdots + Y_n)/n$ where
 $\{Y_j\}$ are independent random variables with a common distribution $\alpha$. Assume that the moment generating function
 $$M(\theta) = E[\exp(\theta Y)] = \int e^{\theta y} \d \alpha(y)$$ is finite for all $\theta$. Then $\{X_n\}$ satisfies the LDP with rate
 function (see, Deuschel and Stroock\cite{DeS})
$$I(x)=\sup_{\theta}[\theta x - \log M(\theta)].$$
\end{example}

\begin{example}
We choose Schilder's theorem as second example which has many important applications in large deviation theory, such as, in the derivation of the Strassen's renowned Law of Iterated Logarithm, in the Wentzell and Freidlin's estimate on the large deviations of randomly perturbed dynamical systems, to name a few.

Let $d\in\mathbb{Z}^{+}$ and
$$A_0 = \big\{\phi\in \mathcal{C}([0, \infty); \mathbb{R}^d): \phi(0)=0\ \text{and}\ \lim_{t\rightarrow\infty}
\frac{|\phi(t)|}{t}=0\big\}.$$
For $\phi\in A_0$ define
$$\|\phi\|_{A_0} = \sup_{t\geq 0} \frac{|\phi(t)|}{1+t}.$$
Then notice that $A_0$ is a separable real Banach space \cite{DE}.
Next, we define $\mathrm{H}^1 = \mathrm{H}^1([0, \infty);
\mathbb{R}^d)$ to be the space of $\phi\in A_0$ with the property
that $\phi(t) = \int_0^t \dot{\phi}(s)\d s, t\geq 0$, for some
$\dot{\phi} \in \mathrm{L}^2([0, \infty); \mathbb{R}^d)$ and set
$\|\phi\|_{\mathrm{H}^1} = \|\dot{\phi}\|_{\mathrm{L}^2([0, \infty);
\mathbb{R}^d)}$, for $\phi\in\mathrm{H}^1$.

Now for given $T>0$, we define $I_{T}: A_0\rightarrow [0, \infty]$ by
\begin{equation*}
I_T(\psi) =\left\{\begin{aligned}
& \frac{1}{2}\int_0^T |\dot{\psi}(t)|^2 \d t\quad\text{if}\ \psi\in \mathrm{H}^1,\\
&\infty\quad\quad\quad\quad\quad\quad\ \text{otherwise}.
\end{aligned}\right.\end{equation*}

Let $\{W(t)\}$ be a standard Wiener process in $\mathbb{R}^d$. Let
the process $$ X_n(t) = \frac{1}{\sqrt{n}} W(t)$$ takes values in a
Polish space $E$. Then $\{X_n\}$ satisfies the LDP on $E$ with the
rate function $I_T$ (see, Dupuis and Ellis \cite{DE}).
\end{example}

\begin{definition}\label{LP1}(Laplace Principle)
For $h \in C_{b}([0, 1])$,
\begin{equation}
\lim_{n \to \infty}{1\over n}\log\int_0^1\,e^{-nh(x)}\,dx = -\min_{x
\in [0, 1]}\,h(x).\label{Lap}
\end{equation}
\end{definition}
\begin{lemma}\label{VL}(Varadhan's Lemma \cite{Va})
Let $E$ be a Polish space and $\{X^{\varepsilon}: \varepsilon > 0\}$ be a family of $E$-valued random elements satisfying LDP with rate function $I$.
Then $\{X^{\varepsilon}: \varepsilon > 0\}$ is said to satisfy the Laplace principle on $E$ with the same rate function $I$ if for all $h \in C_{b}(E)$,
\begin{equation}
\lim_{\varepsilon \rightarrow 0} {\varepsilon }\log E\big\{\exp\big[- \frac{1}{\varepsilon}h(X^{\varepsilon})\big]\big\} = -\inf_{x \in E}
\{h(x) + I(x)\}. \label{LP}
\end{equation}
\end{lemma}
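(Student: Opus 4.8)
The plan is to establish the limit \eqref{LP} by proving the two matching bounds for $J^{\varepsilon}:=\varepsilon\log E\{\exp[-\frac{1}{\varepsilon}h(X^{\varepsilon})]\}$, namely $\liminf_{\varepsilon\to 0}J^{\varepsilon}\ge -\Lambda$ and $\limsup_{\varepsilon\to 0}J^{\varepsilon}\le -\Lambda$, where $\Lambda:=\inf_{x\in E}\{h(x)+I(x)\}$. Throughout I would use that $h\in C_{b}(E)$ is bounded, say $|h|\le M$, and that $I$ is a \emph{good} rate function, so that every level set $K_{N}=\{x:I(x)\le N\}$ is compact; the compactness of these sets is what allows the estimate to be localized even though $E$ need not be compact.

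For the lower bound, I would fix an arbitrary $x_{0}\in E$ and $\delta>0$. By continuity of $h$ there is an open set $G\ni x_{0}$ with $h(y)\le h(x_{0})+\delta$ for all $y\in G$, whence
\begin{equation*}
E\Big\{\exp\big[-\tfrac{1}{\varepsilon}h(X^{\varepsilon})\big]\Big\}\ge \exp\big[-\tfrac{1}{\varepsilon}(h(x_{0})+\delta)\big]\,P_{\varepsilon}(G).
\end{equation*}
Taking $\varepsilon\log(\cdot)$, passing to the $\liminf$, and invoking the open-set lower bound of the LDP (Definition \ref{LDP}(ii)) gives $\liminf_{\varepsilon\to0}J^{\varepsilon}\ge -(h(x_{0})+\delta)-\inf_{y\in G}I(y)\ge -(h(x_{0})+\delta)-I(x_{0})$. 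Letting $\delta\downarrow 0$ and then taking the supremum over $x_{0}\in E$ yields $\liminf_{\varepsilon\to0}J^{\varepsilon}\ge -\Lambda$.

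The upper bound is the main obstacle, since it must control the contribution of the whole non-compact space. Fix $\delta>0$ and pick $N>M+\Lambda$. By goodness, $K_{N}$ is compact, and for each $x\in K_{N}$ continuity of $h$ provides an open $U_{x}\ni x$ whose closure has oscillation $\sup_{\overline{U_{x}}}h-\inf_{\overline{U_{x}}}h\le\delta$. Extract a finite subcover $U_{1},\dots,U_{k}$ of $K_{N}$ and set $C:=E\setminus\bigcup_{i=1}^{k}U_{i}$, a \emph{closed} set contained in $\{I>N\}$, so that $\inf_{C}I\ge N$. From $\mathbf{1}\le\sum_{i=1}^{k}\mathbf{1}_{\overline{U_{i}}}+\mathbf{1}_{C}$ I would split the expectation into $k+1$ pieces. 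On each $\overline{U_{i}}$ the bound $\exp[-\frac{1}{\varepsilon}h]\le\exp[-\frac{1}{\varepsilon}\inf_{\overline{U_{i}}}h]$ together with the closed-set upper bound (Definition \ref{LDP}(i)) gives a $\limsup$ of at most $-\inf_{\overline{U_{i}}}h-\inf_{\overline{U_{i}}}I$, and the oscillation control yields $\inf_{\overline{U_{i}}}h+\inf_{\overline{U_{i}}}I\ge\inf_{\overline{U_{i}}}(h+I)-\delta\ge\Lambda-\delta$; on $C$ the crude bound $\exp[-\frac{1}{\varepsilon}h]\le\exp[\frac{M}{\varepsilon}]$ and $\inf_{C}I\ge N$ give a $\limsup$ of at most $M-N<-\Lambda$. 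Applying the elementary ``largest term'' principle $\limsup_{\varepsilon\to0}\varepsilon\log\sum_{j}a_{j}^{\varepsilon}=\max_{j}\limsup_{\varepsilon\to0}\varepsilon\log a_{j}^{\varepsilon}$ for finitely many nonnegative $a_{j}^{\varepsilon}$, I obtain $\limsup_{\varepsilon\to0}J^{\varepsilon}\le -\Lambda+\delta$, and letting $\delta\downarrow0$ finishes the bound.

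I expect the delicate points to be precisely the two devices used in the upper bound: replacing the merely open tail $\{I>N\}$ by the closed set $C=E\setminus\bigcup_{i}U_{i}$ so that the closed-set LDP estimate applies, and choosing the neighborhoods small enough that the oscillation of $h$ is controlled, which is what converts the sum of separate infima $\inf h+\inf I$ into $\inf(h+I)$ and hence into $\Lambda$. The role of goodness is essential here: without compact level sets there would be no finite subcover and no way to bound the tail uniformly against the factor $e^{M/\varepsilon}$. Combining the two bounds gives $\lim_{\varepsilon\to0}J^{\varepsilon}=-\Lambda$, which is exactly \eqref{LP}.
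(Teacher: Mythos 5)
The paper offers no proof of this lemma: it is quoted as a classical result of Varadhan, with the reader referred to Dupuis and Ellis (Theorems 1.2.1 and 1.2.3), so there is no in-paper argument to compare yours against. What you have written is a correct, self-contained proof of the implication actually intended by the (somewhat awkwardly phrased) statement, namely that the LDP with a good rate function implies the Laplace principle for bounded continuous $h$; it is the standard covering argument found in Dembo--Zeitouni and Dupuis--Ellis. Your lower bound via a single open neighborhood of an arbitrary $x_{0}$ and the LDP open-set estimate is exactly right, and your upper bound correctly isolates the two genuinely delicate devices: covering the compact level set $K_{N}$ by finitely many closed sets of small $h$-oscillation so that the closed-set LDP estimate applies and $\inf h+\inf I$ can be traded for $\inf(h+I)-\delta$, and disposing of the closed complement $C$ by the crude bound $e^{M/\varepsilon}P_{\varepsilon}(C)$ against $\inf_{C}I\ge N$. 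Two small points you should make explicit: first, the choice $N>M+\Lambda$ presupposes $\Lambda<\infty$, which follows from $\Lambda\le M+\inf_{E}I=M$ (the LDP upper bound applied to $F=E$ forces $\inf_{E}I=0$); second, only the inequality $\limsup_{\varepsilon\to0}\varepsilon\log\sum_{j}a_{j}^{\varepsilon}\le\max_{j}\limsup_{\varepsilon\to0}\varepsilon\log a_{j}^{\varepsilon}$ is needed, and it follows from $\sum_{j}a_{j}^{\varepsilon}\le(k+1)\max_{j}a_{j}^{\varepsilon}$. Note also that boundedness of $h$ lets you avoid entirely the moment condition that Varadhan's lemma requires for unbounded $h$; for the purposes of this paper, where only $h\in C_{b}(E)$ is ever used, your argument is exactly the right level of generality.
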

\begin{lemma}\label{BL}(Bryc's Lemma \cite{DE})
The Laplace principle implies the LDP with the same rate function. More precisely, if $\{X^{\varepsilon}: \varepsilon > 0\}$ satisfies the Laplace principle on the Polish space $E$ with the rate function $I$ and if the limit
\begin{equation*}
\lim_{\varepsilon \rightarrow 0} {\varepsilon }\log E\big\{\exp\big[- \frac{1}{\varepsilon}h(X^{\varepsilon})\big]\big\} = -\inf_{x \in E}
\{h(x) + I(x)\}
\end{equation*}
is valid for all $h \in C_{b}(E)$, then $\{X^{\varepsilon}: \varepsilon > 0\}$ satisfies the LDP on $E$ with rate function $I$.
\end{lemma}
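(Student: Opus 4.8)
The plan is to establish the two inequalities of Definition \ref{LDP} directly from the assumed Laplace-principle identity
\[
\lim_{\varepsilon\to 0}\varepsilon\log E\Big[\exp\Big(-\tfrac1\varepsilon h(X^{\varepsilon})\Big)\Big]=-\inf_{x\in E}\{h(x)+I(x)\},\qquad h\in C_b(E),
\]
by testing it against a carefully tailored bounded continuous function $h$ for each target set and then comparing $E[\exp(-\tfrac1\varepsilon h(X^{\varepsilon}))]$ with the probability of that set by elementary two-sided estimates. Throughout I use that $I$ is a good rate function (as required for the LDP in Definition \ref{LDP}), so that every sublevel set $K_M=\{x:I(x)\le M\}$ is compact, and the repeatedly applied elementary fact $\varepsilon\log(a_\varepsilon+b_\varepsilon)\le\varepsilon\log 2+\max(\varepsilon\log a_\varepsilon,\varepsilon\log b_\varepsilon)$.

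For the lower bound, fix an open $G$ and a point $x_0\in G$ with $I(x_0)<\infty$ (if none exists the bound is vacuous). Given $M>0$ I choose $r>0$ with $B(x_0,r)\subset G$ and take $h\in C_b(E)$ with $0\le h\le M$, $h(x_0)=0$ and $h\equiv M$ off $B(x_0,r)$, e.g. $h(x)=M\min(1,\mathrm{dist}(x,x_0)/r)$, so that in particular $h\equiv M$ on $G^{c}$. On one hand $\inf_x\{h+I\}\le h(x_0)+I(x_0)=I(x_0)$, so the identity gives $\lim_\varepsilon\varepsilon\log E[\exp(-\tfrac1\varepsilon h(X^{\varepsilon}))]\ge -I(x_0)$. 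On the other hand, splitting the expectation over $G$ and $G^{c}$ yields $E[\exp(-\tfrac1\varepsilon h(X^{\varepsilon}))]\le P_\varepsilon(G)+e^{-M/\varepsilon}$. Combining these with the $\max$-rule gives $-I(x_0)\le\max\big(\liminf_\varepsilon\varepsilon\log P_\varepsilon(G),-M\big)$; choosing $M>I(x_0)$ forces $\liminf_\varepsilon\varepsilon\log P_\varepsilon(G)\ge -I(x_0)$, and taking the supremum over such $x_0$ gives part (ii).

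The upper bound for closed sets is the crux, and it is the only place where the goodness of $I$ is genuinely used. Fix a closed $F$ and any $M<\inf_{x\in F}I(x)$ (the case $\inf_F I=0$ is trivial, since $P_\varepsilon(F)\le1$ forces $\limsup_\varepsilon\varepsilon\log P_\varepsilon(F)\le0$). The sublevel set $K_M$ is compact and, because $M<\inf_F I$, disjoint from the closed set $F$, so $\rho:=\mathrm{dist}(K_M,F)>0$. I then choose $h\in C_b(E)$ with $h\equiv0$ on $F$, $0\le h\le b$ for a constant $b\ge\inf_F I$, and $h\equiv b$ at distance $\ge\rho$ from $F$, e.g. $h(x)=b\min(1,\mathrm{dist}(x,F)/\rho)$. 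The point of this choice is that $\inf_x\{h+I\}\ge M$: on $F$ and wherever $I\ge M$ this is immediate from $h\ge0$, while on the remaining region $\{I<M\}\subset K_M$ one has $\mathrm{dist}(x,F)\ge\rho$, hence $h=b\ge M$. Since $h\ge0$ vanishes on $F$, the pointwise inequality $\mathbf{1}_F\le\exp(-\tfrac1\varepsilon h)$ holds, whence $P_\varepsilon(F)\le E[\exp(-\tfrac1\varepsilon h(X^{\varepsilon}))]$ and the identity yields $\limsup_\varepsilon\varepsilon\log P_\varepsilon(F)\le-\inf_x\{h+I\}\le -M$. Letting $M\uparrow\inf_F I$ gives part (i).

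The analytic inputs — boundedness and continuity of the constructed $h$, the splitting estimates, and the $\max$-rule for $\varepsilon\log$ — are routine. The main obstacle is the closed-set upper bound: for a non-compact $F$ one cannot make $\inf_x\{h+I\}$ large by a mere neighbourhood-of-$F$ construction, because a bounded $h$ that vanishes on $F$ cannot dominate $I$ on the whole low-$I$ region. The remedy, and the essential use of the good rate function, is that the troublesome set $\{I<M\}$ is contained in the \emph{compact} sublevel set $K_M$, which lies at a positive distance from $F$; this is exactly what permits a single bounded $h$ to be zero on $F$ and simultaneously large on $\{I<M\}$, and it is what makes a separate exponential-tightness argument unnecessary.
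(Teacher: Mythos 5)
Your argument is correct, and it is essentially the standard proof of this result; the paper itself supplies no proof at all, simply quoting the statement from Dupuis and Ellis \cite{DE} (it is their Theorem 1.2.3), so there is nothing in the text to compare against beyond that citation. Both halves of your proof are sound: the open-set lower bound via $h(x)=M\min(1,\mathrm{dist}(x,x_0)/r)$ together with the splitting $E[e^{-h/\varepsilon}]\le P_\varepsilon(G)+e^{-M/\varepsilon}$ and the $\max$-rule, and the closed-set upper bound via the compactness of the sublevel set $K_M$, which is indeed the one place the goodness of $I$ enters and the reason no separate exponential tightness is needed. One cosmetic repair: in the upper bound you ask for a constant $b\ge\inf_F I$, which is impossible when $\inf_F I=+\infty$; all your argument actually uses is $b\ge M$ (so that $h=b\ge M$ on $\{I<M\}\subset K_M$), so simply take $b=M$ and the construction goes through for every closed $F$ and every finite $M<\inf_F I$. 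With that adjustment the proof is complete and self-contained, which is arguably more than the paper offers for this lemma.
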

Note that, Varadhan's Lemma together with Bryc's converse of Varadhan's Lemma state that for Polish space valued random elements,
the Laplace principle and the large deviation principle are equivalent.

Let $(\Omega, \mathcal{F}, P)$ be a probability space equipped with
an increasing  family \\
$\{\mathcal{F}_t\}_{0 \leq t \leq T}$ of
sub-sigma-fields of $\mathcal{F}$  satisfying the usual conditions
of right continuity and $P$-completeness. Let $H$ be a real separable Hilbert space and $Q$ be a strictly positive, symmetric, trace class operator on $H$.
\begin{definition}
A stochastic process $\{W(t)\}_{0\leq t\leq T}$ is said to be an $H$-valued $\mathcal{F}_t$-adapted Wiener process with covariance operator $Q$ if
\begin{enumerate}
\item For each non-zero $h\in H$, $|Q^{1/2}h|^{-1} (W(t), h)$ is a standard one-dimensional Wiener process,
\item For any $h\in H, (W(t), h)$ is a martingale adapted to $\mathcal{F}_t$.
\end{enumerate}
\end{definition}
If $W$ is a an $H$-valued Wiener process with covariance operator $Q$ with $\Tr Q < \infty$, then
$W$ is a Gaussian process on $H$ and $$ E(W(t)) = 0,\quad \text{Cov}\ (W(t)) = tQ, \quad t\geq 0.$$
Let $H_0 = Q^{1/2}H.$ Then $H_0$ is a Hilbert space equipped with the inner product $(\cdot, \cdot)_0$,
$$(u, v)_0 = (Q^{-1/2}u, Q^{-1/2}v),\ \forall u, v\in H_0,$$
where $Q^{-1/2}$ is the pseudo-inverse of $Q^{1/2}$. Since $Q$ is a trace class operator, the imbedding of $H_0$ in $H$ is Hilbert-Schmidt.

Let $L_Q$ denote the space of linear operators $S$ such that $S Q^{1/2}$ is a Hilbert-Schmidt operator from $H$ to $H$. Define the norm on the space $\mathrm{L}_Q$ by $|S|_{\mathrm{L}_Q}^2 = \Tr(SQS^*)$.

Let $$\mathcal{A} = \Big\{H_0-\text{valued}\ \{\mathcal{F}_t\}-\text{predictable processes}\ v\ \text{such that}\ \int_0^T |v(s)|_0^2 \d s < \infty\ \text{a.s}\Big\}.$$
Define the set $S_N$ of bounded deterministic controls as,
$$S_N = \Big\{v \in\mathrm{L}^2([0, T]; H_0): \int_0^T |v(s)|_0^2 \d s \leq N\Big\}.$$
The set $S_N$ endowed with the weak topology on $\mathrm{L}^2([0, T]; H_0)$ is a Polish space \cite{DS}.

Define $\mathcal{A}_N$ as the set of bounded stochastic controls by
$$\mathcal{A}_N = \Big\{v\in\mathcal{A}: v(\omega) \in S_N,\text{P-a.s.}\Big\}.$$
Let $E$ denote a Polish space, and for $\e >0$ let $\mathcal{G}^{\e}: C([0, T]; H) \rightarrow E$ be a
measurable map. Define $$X^{\e } = \mathcal{G}^{\e}(W(\cdot)).$$
We are interested in the large deviation principle for $X^{\e }$ as $\e \to 0$.

\begin{assumption}\label{assum}
There exists a measurable map $\mathcal{G}^0: C([0, T]: H) \to E$
such that the following hold:

\item{1.} Let $\{v^{\e}: \e > 0\} \subset \mathcal{A}_M$ for some $M < \infty$.
Let $v^{\e}$ converge in distribution as $S_M$-valued random
elements to $v$. Then $\mathcal{G}^{\e}(W(\cdot) +
\frac{1}{\sqrt{\e}}\int_0^. v^{\e}(s)\d s)$ converges in
distribution to $\mathcal{G}^0(\int_0^.v(s)\d s)$.

\item{2.} For every $M < \infty$, the set $$K_M = \Big\{\mathcal{G}^0\big(\int_0^. v(s)\d s\big): v \in S_M\Big\}$$ is a compact subset of $E$.
\end{assumption}
For each $f \in E$, define
\begin{equation}\label{rate}
I(f) = \inf_{\big\{v \in \mathrm{L}^2([0, T]; H_0): f = \mathcal{G}^0(\int_0^.v(s)\d s)\big\}}
\Big\{{1\over 2}\int_0^T\,|v(s)|_0^2\d s\Big\}
\end{equation}
where infimum over an empty set is taken as $\infty$.

We now state an important result by Budhiraja and Dupuis \cite{BD1}.
\begin{theorem}\label{main}
Let $X^{\e} = \mathcal{G}^{\e }(W(\cdot ))$. If $\{\mathcal{G}^{\e}\}$ satisfies the
Assumption \ref{assum}, then the family $\{X^{\e}: \e > 0\}$ satisfies
the Laplace principle in $E$ with rate function $I$ given by \eqref{rate}.
\end{theorem}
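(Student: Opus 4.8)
The plan is to derive the Laplace principle directly from a variational (Boué--Dupuis type) representation for exponential functionals of the Wiener process $W$, feeding in the two parts of Assumption \ref{assum}. The representation I would invoke states that for every $h \in C_b(E)$ and every $\e > 0$,
\begin{equation*}
-\e \log E\Big\{\exp\Big[-\tfrac{1}{\e}h\big(\mathcal{G}^{\e}(W(\cdot))\big)\Big]\Big\}
= \inf_{v \in \mathcal{A}} E\Big\{\tfrac{1}{2}\int_0^T |v(s)|_0^2\,\d s
+ h\Big(\mathcal{G}^{\e}\big(W(\cdot) + \tfrac{1}{\sqrt{\e}}\int_0^{\cdot} v(s)\,\d s\big)\Big)\Big\}.
\end{equation*}
Writing $L^{\e}$ for the left-hand side, the goal reduces to proving the two matching bounds $\limsup_{\e \to 0} L^{\e} \le \inf_{f\in E}\{h(f)+I(f)\}$ and $\liminf_{\e \to 0} L^{\e} \ge \inf_{f\in E}\{h(f)+I(f)\}$; since $h$ is bounded, $L^{\e}$ is trapped in $[-\|h\|_{\infty},\|h\|_{\infty}]$ uniformly in $\e$, a fact I will exploit below.

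For the upper bound I would fix $\delta > 0$, choose $f_0 \in E$ with $h(f_0) + I(f_0) \le \inf_{f}\{h(f)+I(f)\} + \delta$ (the infimum being finite since $I(\mathcal{G}^0(0))=0$), and then, assuming $I(f_0)<\infty$, pick a deterministic $v_0 \in \mathrm{L}^2([0,T];H_0)$ with $f_0 = \mathcal{G}^0(\int_0^{\cdot} v_0\,\d s)$ and $\tfrac12\int_0^T |v_0|_0^2\,\d s \le I(f_0) + \delta$, as permitted by the definition \eqref{rate}. Using the constant-in-$\omega$ control $v_0$ as a suboptimal candidate bounds $L^{\e}$ above by $\tfrac12\int_0^T|v_0|_0^2\,\d s + E[h(\mathcal{G}^{\e}(W + \tfrac{1}{\sqrt\e}\int_0^{\cdot}v_0\,\d s))]$. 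Applying part 1 of Assumption \ref{assum} with the constant sequence $v^{\e}\equiv v_0$ gives convergence in distribution of $\mathcal{G}^{\e}(W + \tfrac{1}{\sqrt\e}\int_0^{\cdot}v_0\,\d s)$ to $\mathcal{G}^0(\int_0^{\cdot}v_0\,\d s)=f_0$, so boundedness and continuity of $h$ yield $\limsup_{\e}L^{\e} \le \tfrac12\int_0^T|v_0|_0^2\,\d s + h(f_0) \le I(f_0)+h(f_0)+\delta$; letting $\delta\to 0$ closes this half (the case $I(f_0)=\infty$ is vacuous).

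The lower bound is where I expect the real work to lie. I would select near-optimal controls $v^{\e} \in \mathcal{A}$ with $L^{\e} \ge E\{\tfrac12\int_0^T|v^{\e}|_0^2\,\d s + h(\mathcal{G}^{\e}(W+\tfrac{1}{\sqrt\e}\int_0^{\cdot} v^{\e}\,\d s))\} - \e$. The uniform bound on $L^{\e}$ together with boundedness of $h$ forces $E\int_0^T|v^{\e}|_0^2\,\d s$ to stay bounded, and a truncation/localization argument then lets me assume $v^{\e} \in \mathcal{A}_M$ for a fixed $M<\infty$. Since $S_M$ with the weak $\mathrm{L}^2$-topology is a compact Polish space, the laws of $v^{\e}$ are tight, so along a subsequence $v^{\e}$ converges in distribution to some $v\in\mathcal{A}_M$. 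Part 1 of Assumption \ref{assum} then yields the joint convergence of $(v^{\e},\mathcal{G}^{\e}(W+\tfrac{1}{\sqrt\e}\int_0^{\cdot}v^{\e}\,\d s))$ to $(v,\mathcal{G}^0(\int_0^{\cdot}v\,\d s))$; realizing these limits almost surely via the Skorokhod representation theorem, and combining the weak lower semicontinuity of the convex functional $v\mapsto\tfrac12\int_0^T|v|_0^2\,\d s$ with Fatou's lemma, I would obtain
\begin{equation*}
\liminf_{\e \to 0} L^{\e} \ge E\Big\{\tfrac12\int_0^T|v(s)|_0^2\,\d s + h\big(\mathcal{G}^0(\textstyle\int_0^{\cdot} v\,\d s)\big)\Big\}.
\end{equation*}
Since the definition \eqref{rate} gives $\tfrac12\int_0^T|v(\omega)|_0^2\,\d s \ge I(\mathcal{G}^0(\int_0^{\cdot} v(\omega)\,\d s))$ for a.e.\ $\omega$, the integrand dominates $\inf_{f}\{h(f)+I(f)\}$ pointwise, and taking expectations delivers the lower bound.

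Finally, $I$ is a good rate function: compactness of the level sets $\{f:I(f)\le M\}$ is supplied by part 2 of Assumption \ref{assum}, as every such set is contained in the compact $K_{2M}$ (the factor reflecting the $\tfrac12$ in \eqref{rate}), and this compactness forces the lower semicontinuity of $I$. The principal technical hurdle is the lower bound, and within it the two delicate points are (i) justifying the localization that confines the near-optimal controls $v^{\e}$ to a fixed $\mathcal{A}_M$, which rests on the uniform energy estimate, and (ii) passing to the limit through the weak topology while preserving the lower semicontinuity of the quadratic cost, handled by Skorokhod coupling and convexity.
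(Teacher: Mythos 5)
The paper does not actually prove Theorem \ref{main}: it is imported verbatim from Budhiraja and Dupuis \cite{BD1}, so there is no in-paper argument to compare against. Your outline is, in substance, exactly the proof in that reference --- the scaled variational (Bou\'e--Dupuis) representation for $-\varepsilon\log E\exp(-\frac{1}{\varepsilon}h(\mathcal{G}^{\varepsilon}(W)))$, the upper bound via a near-optimal deterministic control fed into part 1 of Assumption \ref{assum} with a constant sequence, and the lower bound via near-optimal controls, a localization to $\mathcal{A}_M$, tightness in the weak topology of $S_M$, and lower semicontinuity of the quadratic cost --- so the approach is the right one and the main steps are sound. Two small imprecisions are worth flagging. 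First, Assumption \ref{assum}(1) as stated gives only the marginal convergence in distribution of $\mathcal{G}^{\varepsilon}(W+\frac{1}{\sqrt{\varepsilon}}\int_0^{\cdot}v^{\varepsilon}\,\mathrm{d}s)$, not the joint convergence of the pair $(v^{\varepsilon},\mathcal{G}^{\varepsilon}(\cdots))$ that you invoke; this is harmless because the two expectations can be handled separately (superadditivity of $\liminf$, with the $h$-term converging by boundedness and continuity and the cost term handled by lower semicontinuity on $S_M$), but as written it claims more than the hypothesis delivers. Second, containment of the level set $\{I\le M\}$ in the compact $K_{2M+\delta}$ gives only precompactness and does not ``force'' lower semicontinuity; closedness of the level sets must be argued directly (e.g.\ via weak compactness of $S_{2M+\delta}$ and the continuity of $v\mapsto\mathcal{G}^0(\int_0^{\cdot}v\,\mathrm{d}s)$ that follows from Assumption \ref{assum}(1) specialized to deterministic controls), as is done in \cite{BD1}.
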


\begin{remark}\label{rem1}
\item{1.} Notice that since the underlying space $E$ is Polish, the family $\{X^{\e}: \e > 0\}$ satisfies
the LDP in $E$ with the same rate function $I$.

\item{2.} Assumption 1. is a statement on the weak convergence of a certain family of random variables and is at the core of weak convergence approach to the study of large deviations. Assumption 2. essentially says that the level sets of the rate function are compact. In the next section, we have proved that there exists a unique strong solution $\ue$ of the stochastic GOY model with small multiplicative noise with values in $X=C([0, T]; H) \cap \mathrm{L}^2(0, T; V)$. Since $X$ is Polish, there exists a
    Borel-measurable function $\mathcal{G}^{\e}: C([0, T]; H) \to X$ such that $\ue(\cdot) =
\mathcal{G}^{\e}(W(\cdot))$ a.s. Our main result is to prove the family $\{\mathcal{G}^{\e}\}$ satisfies the Assumption
\ref{assum} so that Theorem \ref{main} can be invoked to prove the LDP for $\{\ue : \e>0\}$.
\end{remark}

\section{The Stochastic GOY Model of Turbulence}
\setcounter{equation}{0} The GOY model
(Gledger-Ohkitani-Yamada)~\cite{OY} is a particular case of so
called ``Shell model" (see, Frisch \cite{Fu}). This model is the
Navier-Stokes equation written in the Fourier space where the
interaction between different modes is preserved between nearest
modes. To be precise, the GOY model describes a one-dimensional
cascade of energies among an infinite sequence of complex
velocities, $\{u_n(t)\}$, on a one dimensional sequence of wave
numbers
$$k_n = k_0 2^n,\quad k_0 > 0,\ n=1, 2, \ldots$$
where the discrete index $n$ is referred to as the ``shell index". The equations of motion of the stochastic GOY model of turbulence have the form
\begin{align}\label{goy}
\frac{\d u_n}{\d t} + \nu k_n^2 u_n &+ i\big(a k_n u\s_{n+1}u\s_{n+2} + b k_{n-1}u\s_{n-1}u\s_{n+1} + \nonumber\\ & +ck_{n-2} u\s_{n-1}u\s_{n-2}\big)
= f_n + \sigma_n(t, u_n)\frac{\d w_n(t)}{\d t}, \quad\text{for}\ n= 1, 2, \ldots,
\end{align}
along with the boundary conditions
\begin{equation}\label{bc}
u_{-1} = u_0 = 0.
\end{equation}
Here $u\s_n$ denotes the complex conjugate of $u_n$, $\nu > 0$ is
the kinematic viscosity and $f_n$ is the Fourier component of the
forcing. $a, b$ and $c$ are real parameters such that energy
conservation condition $a + b + c =0$ holds (see Kadanoff, Lohse,
Wang, and Benzi\cite{Ka}; Ohkitani and Yamada\cite{OY}). For the
standard model $a=-1, b=1/2$ and $c=1/2$. For each $n$, $w_n$ is one
dimensional Brownian motion and the noise coefficient $\sigma_n$ is
assumed to satisfy the following properties,
\begin{enumerate}
\item[a.1.] For all $t \in [0, T]$, there exists a positive constant $K_1$ such that, $$|\sigma_n(t, u_n)|^2 \leq K_1 k_n^2 |u_n|^2,$$
\item[a.2.]  For all $t \in [0, T]$,  there exists a positive constant $K_2$ such that,
$$|\sigma_n(t, u_n) - \sigma_n(t, v_n)|^2\leq K_2 k_n^2 |u_n -
v_n|^2.$$
\end{enumerate}

\subsection{Functional Setting}
Let $H$ be a real Hilbert space such that
\begin{equation*}
H :=\Big\{u=(u_1, u_2, \ldots) \in \mathbb{C}^{\infty}: \sum_{n=1}^{\infty} |u_n|^2 < \infty\Big\}.
\end{equation*}
For every $u, v \in H$, the scalar product $(\cdot,\cdot)$ and norm $|\cdot|$ are defined on $H$ as
$$(u, v)_H = Re\ \sum_{n=1}^{\infty} u_n v\s_n, \quad |u| = \big(\sum_{n=1}^{\infty} |u_n|^2\big)^{1/2}.$$
Let us now define the space
$$V :=\Big\{u\in H: \sum_{n=1}^{\infty} k_n^2|u_n|^2 < \infty\Big\},$$
which is a Hilbert space equipped with the norm
$$\|u\| = \big(\sum_{n=1}^{\infty} k_n^2|u_n|^2\big)^{1/2}.$$
The linear operator $A: D(A) \rightarrow H$ is a positive definite, self adjoint linear operator defined by
\begin{equation}\label{A}
Au=((Au)_1, (Au)_2, \ldots), \ \text{where}\ (Au)_n = k_n^2 u_n, \quad\forall u\in D(A).
\end{equation}
The domain of $A$, $D(A) \subset H$, is a Hilbert space equipped with the norm
$$\|u\|_{D(A)} = |Au| = \big(\sum_{n=1}^{\infty} k_n^4|u_n|^2\big)^{1/2}, \quad\forall u\in D(A).$$
Since the operator $A$ is positive definite, we can define the power $A^{1/2}$ ,
$$A^{1/2}u = (k_1 u_1, k_2 u_2, \ldots), \quad\forall u=(u_1, u_2, \ldots).$$
Furthermore, we define the space
$$D(A^{1/2}) = \Big\{u=(u_1, u_2, \ldots): \sum_{n=1}^{\infty} k_n^2 |u_n|^2 < \infty\Big\}$$
which is a Hilbert space equipped with the scalar product
$$ (u, v)_{D(A^{1/2})} = (A^{1/2}u, A^{1/2}v), \quad\forall u, v\in D(A^{1/2}),$$
and the norm $$\|u\|_{D(A^{1/2})} = \big(\sum_{n=1}^{\infty} k_n^2 |u_n|^2\big)^{1/2}.$$
Note that $V = D(A^{1/2})$. We consider $V^{\prime} = D(A^{-1/2})$ as the dual space of $V$. Then the following inclusion holds
$$V\subset H = H^{\prime}\subset V^{\prime}.$$
We will now introduce the sequence spaces analogue to Sobolev functional spaces.
For $1\leq p <\infty$ and $s\in\mathbb{R}$
$$\mathrm{W}^{s, p} :=\Big\{u = (u_1, u_2, \ldots): \|A^{s/2}u\|_p = \big(\sum_{n=1}^{\infty}(k_n^s|u_n|)^p\big)^{1/p} < \infty\Big\},$$
and for $p=\infty$
$$\mathrm{W}^{s, \infty} :=\Big\{u = (u_1, u_2, \ldots): \|A^{s/2}u\|_{\infty} = \sup_{1\leq n<\infty} (k_n^s|u_n|) < \infty\Big\},$$
where for $u\in\mathrm{W}^{s, p}$ the norm is defined as
$$\|u\|_{\mathrm{W}^{s, p}} = \|A^{s/2}u\|_p.$$
Here $\|\cdot\|$ denotes the usual norm in the $l^p$ sequence space. It is clear from the above definitions that $W^{1, 2} = V = D(A^{1/2})$.

We now prove a useful Lemma which has been used in this work.
\begin{lemma}\label{La}
For any smooth function $u\in H$, the following holds:
\begin{align}
  \| u \|_{l^4}^4 \leq C |u|^2 \ \|u\|^2.\label{L4}
\end{align}
\end{lemma}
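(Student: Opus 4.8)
The plan is to deduce \eqref{L4} from the elementary factorization of the $l^4$ norm into an $l^\infty$ factor and an $l^2$ factor, combined with the continuous embedding $V \hookrightarrow l^\infty$. The latter is available here for a cheap reason: the weights $k_n = k_0 2^n$ grow geometrically, so the $V$-norm controls each single coordinate $|u_n|$ with a uniform (and in fact summable) bound. Throughout I may assume $\|u\| < \infty$, since otherwise the right-hand side is infinite and there is nothing to prove.

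First I would write, splitting off one factor of $|u_n|^2$ and bounding it by the supremum,
\begin{equation*}
\|u\|_{l^4}^4 = \sum_{n=1}^{\infty} |u_n|^4 \leq \Big(\sup_{n\geq 1} |u_n|^2\Big)\sum_{n=1}^{\infty} |u_n|^2 = \Big(\sup_{n\geq 1} |u_n|^2\Big)\,|u|^2 .
\end{equation*}
This reduces the whole estimate to controlling $\sup_{n} |u_n|^2$ by $\|u\|^2$. For that, note that from the definition $\|u\|^2 = \sum_{m=1}^{\infty} k_m^2 |u_m|^2$ one has $k_n^2 |u_n|^2 \leq \|u\|^2$ for every fixed $n$, hence $|u_n|^2 \leq \|u\|^2 / k_n^2$. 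Since $k_n = k_0 2^n \geq 2 k_0$ for all $n\geq 1$, it follows that $\sup_{n\geq 1} |u_n|^2 \leq \|u\|^2/(4 k_0^2)$. Substituting this into the display above yields \eqref{L4} with the explicit constant $C = 1/(4k_0^2)$.

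There is no genuine obstacle in this argument; the only thing to watch is that the embedding constant is finite, which is precisely where the geometric growth of the $k_n$ enters (for a generic weight sequence the coordinatewise bound $|u_n|^2\le \|u\|^2/k_n^2$ could degenerate, but here $k_n\geq 2k_0$ gives a uniform bound). If a more symmetric-looking estimate were preferred, I could instead interpolate the two bounds $|u_n|^2 \leq |u|^2$ and $|u_n|^2 \leq \|u\|^2/k_n^2$ by the geometric mean to get the discrete Agmon inequality $\sup_n |u_n|^2 \leq |u|\,\|u\|/(2k_0)$; together with the Poincar\'e-type inequality $|u| \leq \|u\|/(2k_0)$ (immediate from $\|u\|^2 \geq 4k_0^2 |u|^2$) this reproduces the same conclusion up to the value of $C$.
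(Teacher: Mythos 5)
Your proof is correct, and it takes a genuinely different (and more elementary) route than the paper. The paper factors $|u_n|^4=(k_n|u_n|)\,(k_n^{-1}|u_n|)\,|u_n|^2$, applies H\"older in the form $l^\infty\times l^2\times l^2$, divides through by $\big(\sum_n|u_n|^4\big)^{1/2}$, and then invokes the two embeddings $\|u\|_{W^{1,\infty}}\leq\|u\|_V$ and $\|u\|_{V^{\prime}}\leq C|u|$ --- a discrete analogue of the Ladyzhenskaya interpolation $\|u\|_{l^4}^2\leq C\,\|u\|_{V}\,\|u\|_{V^{\prime}}$. You instead use the crude splitting $\sum_n|u_n|^4\leq\big(\sup_n|u_n|^2\big)\,|u|^2$ together with the coordinatewise bound $|u_n|^2\leq\|u\|^2/k_n^2\leq\|u\|^2/(4k_0^2)$, which is just the embedding $V\hookrightarrow l^\infty$. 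What your approach buys: it avoids the Cauchy--Schwarz step and the division by $\big(\sum_n|u_n|^4\big)^{1/2}$ (which in the paper tacitly requires that quantity to be finite and nonzero, a point one would normally handle by truncation), it produces the explicit constant $C=1/(4k_0^2)$, and it makes transparent that the only structural input is $\inf_n k_n>0$. What it gives up: the paper's route passes through the sharper intermediate estimate $\|u\|_{l^4}^2\leq C\,\|u\|_{V}\,\|u\|_{V^{\prime}}$, which is the form that generalizes to settings where one wants to trade regularity between the two factors; your remark about interpolating $|u_n|^2\leq|u|^2$ and $|u_n|^2\leq\|u\|^2/k_n^2$ via the geometric mean (a discrete Agmon inequality) recovers exactly that flexibility, so nothing essential is lost.
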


\begin{proof}
Note that,
\begin{align*}
\sum_{n=1}^{\infty} |u_n|^4 &= \sum_{n=1}^{\infty}(k_n |u_n|)\
(k_n^{-1}|u_n|) \ |u_n|^2\\
&\leq (\sup_{1\leq n <\infty} k_n|u_n|)\ \sum_{n=1}^{\infty}(k_n^{-1}|u_n|) \ |u_n|^2\\
&\leq C \|u\|_{W^{1, \infty}}\
\big(\sum_{n=1}^{\infty}(k_n^{-1}|u_n|)^2\big)^{1/2}\
\big(\sum_{n=1}^{\infty} |u_n|^4\big)^{1/2}.
\end{align*}
Thus
\begin{align}\label{L3}
\big(\sum_{n=1}^{\infty} |u_n|^4\big)^{1/2}\leq C \|u\|_{W^{1,
\infty}}\ \|u\|_{V^{\prime}}.
\end{align}
 It is obvious to see that,
$$\|u\|_{W^{1, \infty}} = \| A^{1/2}u\|_{\infty} \leq \|
A^{1/2}u\|_{2} = \|u\|_{V}.$$ Also the inclusion $V\subset H
=H^{\prime}\subset V^{\prime}$ holds. Hence $\|u\|_{V^{\prime}} \leq
C |u|_H$.

So from \eqref{L3}, one can see,
\begin{align*}
\|u\|_{l^4}^2 \leq C |u|_H \ \|u\|_{V}.
\end{align*}
Thus squaring both sides we have the result.
\end{proof}

\subsection{Properties of the linear and nonlinear operators}
We define the bilinear operator $B(\cdot, \cdot): V \times H\rightarrow H$ as
$$B(u, v) = (B_1(u,v), B_2(u, v), \ldots),$$ where
\begin{align*}
B_n(u, v)= ik_n\big(\frac{1}{4}u\s_{n+1} v\s_{n-1} -
\frac{1}{2}(u\s_{n+1} v\s_{n+2} + u\s_{n+2} v\s_{n+1})
 + \frac{1}{8}u\s_{n-1} v\s_{n-2}\big).
\end{align*}
In other words, if $\{e_n\}_{n=1}^{\infty}$ be a orthonormal
basis of $H$, i.e. all the entries of $e_n$ are zero except at the
place $n$ it is equal to $1$, then
\begin{align}\label{B}
B(u, v)= i\sum_{n=1}^{\infty}k_n\big(\frac{1}{4}u\s_{n+1} v\s_{n-1}
- \frac{1}{2}(u\s_{n+1} v\s_{n+2} + u\s_{n+2} v\s_{n+1})
 + \frac{1}{8}u\s_{n-1} v\s_{n-2}\big)e_n.
\end{align}

 The following lemma says that $B(u, v)$ makes
sense as an element of $H$, whenever $u\in V$ and $v\in H$ or $u\in
H$ and $v\in V$. It also says that $B(u, v)$ makes sense as an
element of $V^{\prime}$. Here we state the following lemma which has
been proved in Constantin, Levant and Titi \cite{CLT} for the Sabra
shell model, but one can also prove the similar estimates for the
GOY model (see Barbato, Barsanti, Bessaih, and Flandoli\cite{Ba}).
\begin{lemma}\label{Bprop1}
(i) There exist constants $C_1 >0, C_2 >0$,
\begin{equation}
|B(u, v)| \leq C_1 \|u\| |v|, \quad\forall u\in V, v\in H,
\end{equation}
and
\begin{equation}
|B(u, v)| \leq C_2 |u| \|v\|, \quad\forall u\in H, v\in V.
\end{equation}
(ii) $B: H\times H\rightarrow V^{\prime}$ is a bounded bilinear operator and for a constant $C_3 > 0$
\begin{equation}
\|B(u, v)\|_{V^{\prime}} \leq C_3 |u| |v|, \quad\forall u, v\in H.
\end{equation}
(iii) $B: H\times D(A)\rightarrow V$ is a bounded bilinear operator and for a constant $C_4 > 0$
\begin{equation}
\|B(u, v)\|_{V} \leq C_4 |u| |Av|, \quad\forall u\in H, v\in D(A).
\end{equation}
(iv) For every $u\in V$ and $v\in H$
\begin{equation}\label{1}
(B(u, v), v) = 0.
\end{equation}
\end{lemma}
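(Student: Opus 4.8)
The plan is to derive the four estimates (i)--(iii) from one common mechanism and to treat the cancellation identity (iv) by a careful index shift. The single feature exploited throughout is that the wavenumbers grow geometrically, $k_n = k_0 2^n$, so that neighbouring shells are comparable: $k_{n-2}, k_{n-1}, k_{n+1}, k_{n+2}$ all lie between $\tfrac14 k_n$ and $4k_n$. This lets me slide the prefactor $k_n$ appearing in $B_n$ onto whichever neighbouring component is convenient, at the cost of a universal constant.

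For (i), I would start from
\begin{equation*}
|B(u,v)|^{2} = \sum_{n=1}^{\infty} |B_n(u,v)|^{2} \leq C\sum_{n=1}^{\infty} k_n^{2}\Big(|u_{n+1}|^{2}|v_{n-1}|^{2} + |u_{n+1}|^{2}|v_{n+2}|^{2} + |u_{n+2}|^{2}|v_{n+1}|^{2} + |u_{n-1}|^{2}|v_{n-2}|^{2}\Big).
\end{equation*}
For the first inequality of (i) I move $k_n$ onto the $u$-component in each of the four terms, so that $k_n^{2}|u_{\bullet}|^{2} \leq C(\sup_m k_m^{2}|u_m|^{2})$; pulling this supremum out and summing the remaining $|v_{\bullet}|^{2}$ over $n$ gives $|B(u,v)|^{2} \leq C(\sup_m k_m|u_m|)^{2}\,|v|^{2}$. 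Since $\sup_m k_m|u_m| = \|u\|_{W^{1,\infty}} \leq \|u\|_V = \|u\|$ (the embedding already used in the proof of Lemma \ref{La}), this yields $|B(u,v)| \leq C_1\|u\|\,|v|$. The second inequality of (i) is identical with the roles of $u$ and $v$ interchanged: I keep $k_n$ with the $v$-component, bound $\sup_m k_m|v_m| \leq \|v\|$, and sum the $|u_{\bullet}|^{2}$ to produce $|u|$.

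Parts (ii) and (iii) are the same computation carried out in the $V'$ and $V$ norms. For (ii), $\|B(u,v)\|_{V'}^{2} = \sum_n k_n^{-2}|B_n(u,v)|^{2}$, and the factor $k_n^{-2}$ exactly cancels the $k_n^{2}$ produced by $B_n$; what remains is $\sum_n(|u_{n+1}|^{2}|v_{n-1}|^{2} + \cdots) \leq (\sup_m|u_m|^{2})\,|v|^{2} \leq |u|^{2}|v|^{2}$, giving $\|B(u,v)\|_{V'}\leq C_3|u||v|$. For (iii), $\|B(u,v)\|_V^{2} = \sum_n k_n^{2}|B_n(u,v)|^{2}$ carries an extra $k_n^{2}$, so each summand has weight $k_n^{4}$; transferring this weight to the relevant $v$-component (again by geometric comparability) and bounding $\sup_m|u_m|\leq|u|$, the surviving sum is $\sum_m k_m^{4}|v_m|^{2} = |Av|^{2}$, whence $\|B(u,v)\|_V \leq C_4|u||Av|$. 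In all three cases the required absolute convergence is supplied a posteriori by these very bounds.

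The main obstacle is the cancellation (iv). First note that $(B(u,v),v)$ is well defined and the series $\sum_n B_n(u,v)v\s_n$ converges absolutely, by (i). The strategy is to reindex the four groups of terms so that the single $u$-factor always appears as $u\s_j$, using the boundary convention $u_{-1}=u_0=v_{-1}=v_0=0$ to discard boundary contributions. After the shifts ($j=n+1$ in the first two groups, $j=n+2$ in the third, $j=n-1$ in the fourth) the terms collapse onto only two distinct monomials in $v$, namely $u\s_j v\s_{j-2}v\s_{j-1}$ and $u\s_j v\s_{j-1}v\s_{j+1}$, whose coefficients are $i\big(\tfrac14 k_{j-1}-\tfrac12 k_{j-2}\big)$ and $i\big(-\tfrac12 k_{j-1}+\tfrac18 k_{j+1}\big)$ respectively. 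Substituting $k_m=k_0 2^m$ makes both coefficients vanish identically, so in fact $\sum_n B_n(u,v)v\s_n = 0$, and hence $(B(u,v),v)=0$. The delicate point --- and the reason the specific weights $\tfrac14, \tfrac12, \tfrac18$ together with the dyadic ratio $k_{n+1}/k_n = 2$ are essential --- is precisely this exact cancellation of the collected coefficients; keeping honest track of the index shifts and the boundary terms is the only real labour.
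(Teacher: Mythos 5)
Your proof is correct. Note that the paper does not actually prove Lemma \ref{Bprop1}: it only points to Constantin, Levant and Titi \cite{CLT} (for the Sabra model) and to Barbato, Barsanti, Bessaih and Flandoli \cite{Ba} (for the GOY model), so there is no in-text argument to compare against, and your write-up supplies the missing computation. For (i)--(iii) the only input you use is the geometric comparability of neighbouring wavenumbers, $\tfrac14 k_n \le k_{n\pm 1}, k_{n\pm 2} \le 4k_n$, which lets the weight $k_n^{s}$ be slid onto whichever factor is to carry the stronger norm, combined with $\sup_m k_m^{s}|u_m| \le \big(\sum_m k_m^{2s}|u_m|^2\big)^{1/2}$; this is sound, and your remark that the resulting bounds justify the termwise manipulations a posteriori is enough. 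For (iv) the reindexing is the right mechanism and the arithmetic checks out: the two collected coefficients $\tfrac14 k_{j-1} - \tfrac12 k_{j-2}$ and $-\tfrac12 k_{j-1} + \tfrac18 k_{j+1}$ both vanish because $k_m = k_0 2^m$, the boundary convention $u_0=u_{-1}=v_0=v_{-1}=0$ disposes of the low-index stragglers, and the absolute convergence of each of the four sub-sums (available from the estimates in (i)) legitimizes the regrouping. This cancellation is precisely the shell-model avatar of the energy-conservation condition $a+b+c=0$. One point worth making explicit in a final version: your computation gives the stronger statement that $\sum_n B_n(u,v)\,v^{\star}_n$ vanishes as a complex number, not merely that its real part --- which is all the inner product $(\cdot,\cdot)_H$ extracts --- is zero.
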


We now present one more important property of the nonlinear operator $B$ in the following lemma which will play important role in the later part of this section.
\begin{lemma}\label{Bprop2}
If $w=u-v$, then
$$B(u, u)-B(v, v) = B(v, w) + B(w, v) + B(w, w).$$
\end{lemma}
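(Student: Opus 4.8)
The plan is to derive the identity directly from the additivity of $B(\cdot,\cdot)$ in each of its two arguments, since the claim is a purely algebraic consequence of writing $u = v + w$. First I would substitute $u = v + w$ into $B(u,u)$ and expand
$$B(v+w,\, v+w) = B(v,v) + B(v,w) + B(w,v) + B(w,w),$$
after which subtracting $B(v,v)$ from both sides yields exactly the asserted formula. The entire content therefore reduces to justifying that $B$ is additive separately in each slot, that is, $B(x+y, z) = B(x,z) + B(y,z)$ and $B(x, y+z) = B(x,y) + B(x,z)$ for all admissible $x,y,z$.

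To verify this biadditivity I would argue componentwise using the explicit definition \eqref{B}. Each component $B_n(u,v)$ is a finite linear combination, with fixed real coefficients and the common prefactor $ik_n$, of monomials of the form $u\s_{j}\, v\s_{k}$ with indices $j,k$ drawn from $\{n-2, n-1, n+1, n+2\}$. Because complex conjugation is additive, $(x+y)\s_j = x\s_j + y\s_j$, each such monomial is additive in its first factor when the first argument is replaced by a sum, and likewise additive in the second factor. Summing over the four monomials and over $n$ then shows that $B$ is additive in each argument, which is precisely what the expansion above requires.

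There is no genuine obstacle here: the statement is an elementary algebraic identity, and the only point warranting a moment's care is that the definition of $B_n$ involves the conjugates $u\s$ rather than $u$ itself, so that $B$ is conjugate-linear rather than linear in each variable. This distinction is immaterial for the present identity, since the expansion invokes only additivity and not homogeneity, and additivity is unaffected by conjugation. Hence the proof collapses to the single expansion displayed above once biadditivity has been recorded.
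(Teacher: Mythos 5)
Your proof is correct. The identity does follow from additivity of $B$ in each slot together with the expansion $B(v+w,v+w)=B(v,v)+B(v,w)+B(w,v)+B(w,w)$, and your remark that additivity survives the complex conjugation in the definition of $B_n$ (even though homogeneity does not, $B$ being conjugate-bilinear) is exactly the point that needs to be recorded. The paper runs the same algebra in the opposite direction and at a lower level: it writes out $B(v,w)+B(w,v)+B(w,w)$ componentwise from the definition \eqref{B}, substitutes $w=u-v$ into every monomial, and cancels terms by hand until $B(u,u)-B(v,v)$ remains; biadditivity is never isolated as a separate fact. The two arguments rest on identical computations with the monomials $u^{\star}_{j}v^{\star}_{k}$, but yours factors out the one structural property actually used, which makes it shorter, less error-prone, and immediately reusable --- for instance, the same one-line expansion gives the analogous identity for the Sabra model nonlinearity mentioned in the paper's final remark, whose monomials mix conjugated and unconjugated entries but are still additive in each argument. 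Nothing is missing.
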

\begin{proof}
The proof is straightforward. We start with the right hand side and express everything in terms of $u$ and $v$ by using $w=u-v$ and rearranging,
\begin{align*}
&B(v, w) + B(w, v) + B(w, w)\\
&=i\sum_{n=1}^{\infty}k_n\Big[\frac{1}{4} w\s_{n-1}v\s_{n+1} - \frac{1}{2} v\s_{n+1}w\s_{n+2} - \frac{1}{2} w\s_{n+1}v\s_{n+2} + \frac{1}{8} v\s_{n-1}w\s_{n-2}+\\
&\quad\quad +\frac{1}{4} v\s_{n-1}w\s_{n+1} - \frac{1}{2} w\s_{n+1}v\s_{n+2} - \frac{1}{2} v\s_{n+1}w\s_{n+2} + \frac{1}{8} w\s_{n-1}v\s_{n-2}+\\
&\quad\quad +\frac{1}{4} w\s_{n-1}w\s_{n+1} - w\s_{n+1}w\s_{n+2} + \frac{1}{8} w\s_{n-1}w\s_{n-2}\Big]e_n\\
&=i\sum_{n=1}^{\infty}k_n\Big[\frac{1}{4} w\s_{n-1}u\s_{n+1} -
u\s_{n+1}w\s_{n+2} + \frac{1}{8} u\s_{n-1}w\s_{n-2}
+\frac{1}{4} v\s_{n-1}w\s_{n+1} -\\
&\quad\quad - w\s_{n+1}v\s_{n+2} + \frac{1}{8} w\s_{n-1}v\s_{n-2}\Big]e_n\\
&=i\sum_{n=1}^{\infty}k_n\Big[\frac{1}{4} u\s_{n-1}u\s_{n+1} - \frac{1}{4} v\s_{n-1}u\s_{n+1} - u\s_{n+1}u\s_{n+2} + u\s_{n+1}v\s_{n+2}+\\
&\quad\quad + \frac{1}{8} u\s_{n-1}u\s_{n-2} - \frac{1}{8} u\s_{n-1}v\s_{n-2} + \frac{1}{4} v\s_{n-1}u\s_{n+1} - \frac{1}{4} v\s_{n-1}v\s_{n+1}-\\
&\quad\quad - u\s_{n+1}v\s_{n+2} + v\s_{n+1}v\s_{n+2} + \frac{1}{8} u\s_{n-1}v\s_{n-2} - \frac{1}{8} v\s_{n-1}v\s_{n-2}\Big]e_n\\
&=i\sum_{n=1}^{\infty}k_n\Big[\frac{1}{4} u\s_{n-1}u\s_{n+1} - u\s_{n+1}u\s_{n+2} + \frac{1}{8} u\s_{n-1}u\s_{n-2}\Big]e_n\\
&\quad\quad -i\sum_{n=1}^{\infty}k_n\Big[\frac{1}{4} v\s_{n-1}v\s_{n+1} - v\s_{n+1}v\s_{n+2} + \frac{1}{8} v\s_{n-1}v\s_{n-2}\Big]e_n\\
&=B(u, u)-B(v, v).
\end{align*}
\end{proof}

With above functional setting and following the classical treatment of the Navier-Stokes equation, and in order to simplify the notation one can write the stochastic GOY model of turbulence \eqref{goy} in a Hilbert space $H$ in the following way,
\begin{align}\label{goy1}
&\d u + \big[\nu Au + B(u, u)\big] \d t = f(t) \d t + \sigma(t, u) \d W(t)\\
& u(0) = u_0,
\end{align}
where $u=(u_1, u_2,\ldots)\in H$, the operators $A$ and $B$ are
defined through \eqref{A} and \eqref{B} respectively,  $f=(f_1, f_2,
\ldots), \sigma(t, u)=(\sigma_1(t, u_1), \sigma_2(t, u_2), \ldots),$
and $W=(w_1, w_2, \ldots)$. Here $(W(t)_{t\geq 0})$ is a $H$-valued
Wiener process with trace class covariance. The noise coefficient
$\sigma : [0, T] \times V \to L_Q(H_0; H)$ is such that it satisfies
the following hypotheses:
\begin{enumerate}
\item[A.1.]  The function $\sigma \in C([0, T] \times V; L_Q(H_0; H))$
\item[A.2.]  For all $t \in (0, T)$, there exists a positive constant $K$ such that $|\sigma(t, u)|_{L_Q}^2 \le K(1 +||u||^2)$.
\item[A.3.]  For all $t \in (0, T)$,  there exists a positive constant $L$ such that for all $u, v \in V$,
$|\sigma(t, u) - \sigma(t, v)|^2_{L_Q} \le L||u - v||^2$ .
\end{enumerate}
\begin{remark}
The above hypotheses can be verified from the assumptions $(a.1. -
a.2.)$ on the noise coefficients. Notice that, $Q: H \to H$ is a
trace class covariance (nuclear) operator and hence compact. So $H_0
= Q^{1/2}H$ is a separable Hilbert space and the imbedding of $H_0$
in $H$ is Hilbert-Schmidt. Let $\{e_n\}_{n=1}^{\infty}$ be the
eigenfunctions of $Q$ (may not be complete). Then $Qe_n = \lambda_n e_n$, where each
$\lambda_n$ is positive real and $\sum_n \lambda_n < \infty$. Note,
\begin{align*}
|\sigma(t, u)|_{L_Q}^2 &=\sum_{m, n=1}^{\infty}|(\sigma h_m, e_n)|^2= \sum_{m, n=1}^{\infty} \lambda_m |(\sigma e_m, e_n)|^2\\
 &= (\sigma Q^{1/2}, \sigma Q^{1/2})=\Tr(\sigma Q \sigma),
\end{align*}
where  $\{h_m\}$, with $h_m = \sqrt{\lambda_m}e_m, m=1, 2, \ldots$ are orthonormal basis in $H_0$.

Then, using assumption $(a.1.)$ and letting $\lambda = \sup_{1\leq
n< \infty}\lambda_n < \infty$, one can have
\begin{align*}
|\sigma(t, u)|_{L_Q}^2&= \sum_{n=1}^{\infty} \lambda_n |\sigma_n(t, u_n)|^2
\leq (\sup_{1\leq n< \infty}\lambda_n) \sum_{n=1}^{\infty}|\sigma_n(t, u_n)|^2\nonumber\\
&\leq K_1 \lambda\sum_{n=1}^{\infty}k_n^2 |u_n|^2 \leq K(1+
\|u\|^2_V),
\end{align*}
which shows that hypothesis $(A.2)$ holds.

Similarly,
\begin{align*}
|\sigma(t, u) - \sigma(t, v)|^2_{L_Q}&\leq \lambda \sum_{n=1}^{\infty}|\sigma_n(t, u_n)-\sigma_n(t, v_n)|^2\nonumber\\
&\leq\lambda K_2 \sum_{n=1}^{\infty}k_n^2 |u_n - v_n|^2 = L \|u -
v\|^2_V.
\end{align*}
Thus the hypothesis $(A.3.)$ holds true.

Thus in the abstract setting of stochastic GOY model, the
assumptions $(a.1. - a.2.)$ are required on the noise coefficient to
impose the hypotheses $(A.1.-A.3.)$ in the Hilbert space valued
construction.
\end{remark}

In the following lemma we will show that sum of the linear and
nonlinear operator is locally monotone in the $l^4$-ball in $V$.
\begin{lemma}\label{Mon}
For a given $r > 0$, let us denote by $\mathbb{B}_r$ the closed
$l^4$-ball in $V$:
$$\mathbb{B}_r = \Big\{v\in V; \|v\|_{l^4} \leq r\big\}.$$
Define the nonlinear operator F on $V$ by $F(u):=-\nu Au - B(u, u)$. Then for any $0 < \varepsilon <\frac{\nu}{2L}$, where $L$ is the positive constant that appears in the condition (A.3), the pair $(F, \sqrt{\varepsilon}\sigma)$ is monotone in $\mathbb{B}_r$, i.e. for any $u\in V$ and $v\in \mathbb{B}_r$
\begin{equation}\label{monotone}
(F(u) - F(v), w)  - \frac{r^4}{\nu^3} |w|^2 + \varepsilon |\sigma(t, u) - \sigma(t, v)|^2_{L_Q} \leq 0,
\end{equation}
where $w = u - v$.
\end{lemma}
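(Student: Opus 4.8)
The plan is to expand the left-hand side of \eqref{monotone} by inserting the definition $F(u) = -\nu Au - B(u,u)$ and isolating the three contributions: the linear viscous term, the nonlinear term, and the noise term. First I would write
$$(F(u) - F(v), w) = -\nu(A(u-v), w) - (B(u,u) - B(v,v), w) = -\nu\|w\|^2 - (B(u,u)-B(v,v), w),$$
using that $(Aw, w) = \|w\|^2$ since $A$ is positive definite self-adjoint with $(Aw,w) = \sum_n k_n^2|w_n|^2$. For the nonlinear piece I would invoke Lemma \ref{Bprop2} to rewrite $B(u,u) - B(v,v) = B(v,w) + B(w,v) + B(w,w)$, and then test against $w$. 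The cancellation property \eqref{1} of Lemma \ref{Bprop1}(iv) should kill two of these three terms: $(B(v,w),w) = 0$ (with $v \in V$, $w \in H$) and $(B(w,w),w) = 0$ (with $w \in V$). This leaves only $(B(w,v),w)$, so the nonlinear contribution reduces to a single term $-(B(w,v),w)$.

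The crux of the argument is then to bound $(B(w,v),w)$ and absorb it against the viscous term, using the hypothesis $v \in \mathbb{B}_r$. Here I would estimate $|(B(w,v),w)|$ by a Hölder-type inequality applied to the explicit shell structure of $B$: the bilinear term is a sum of products $k_n w_{n\pm}^\star v_{n\pm}^\star w_n^\star$, so it is naturally controlled by $\|w\|_{l^4}^2 \|v\|_{l^4}$ up to a constant. The key then is Young's inequality to split the resulting factor so that a full power of $\|w\|^2 = \|w\|_V^2$ can be absorbed into the $-\nu\|w\|^2$ from the viscous term, leaving behind a factor proportional to $\|v\|_{l^4}^4 |w|^2$. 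Since $v \in \mathbb{B}_r$ means $\|v\|_{l^4} \le r$, this produces a bound of the form $|(B(w,v),w)| \le \frac{\nu}{2}\|w\|^2 + \frac{C r^4}{\nu^3}|w|^2$, which is exactly the shape needed to match the $\frac{r^4}{\nu^3}|w|^2$ term appearing in \eqref{monotone} (so the constant $C$ should be tracked to confirm it is absorbed, or the statement read as holding for a suitable normalization).

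For the noise term I would use hypothesis (A.3), namely $|\sigma(t,u) - \sigma(t,v)|_{L_Q}^2 \le L\|u-v\|^2 = L\|w\|^2$, so that $\varepsilon|\sigma(t,u)-\sigma(t,v)|_{L_Q}^2 \le \varepsilon L\|w\|^2$. Collecting everything, the left-hand side of \eqref{monotone} is bounded above by
$$-\nu\|w\|^2 + \frac{\nu}{2}\|w\|^2 + \frac{r^4}{\nu^3}|w|^2 - \frac{r^4}{\nu^3}|w|^2 + \varepsilon L\|w\|^2 = \Big(-\frac{\nu}{2} + \varepsilon L\Big)\|w\|^2.$$
The hypothesis $\varepsilon < \frac{\nu}{2L}$ forces $-\frac{\nu}{2} + \varepsilon L < 0$, so this quantity is $\le 0$, which is precisely the claimed inequality.

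I expect the main obstacle to be the nonlinear estimate on $(B(w,v),w)$: getting the exponents right so that after applying Lemma \ref{La} (the interpolation $\|w\|_{l^4}^4 \le C|w|^2\|w\|^2$) and Young's inequality, the $\|w\|^2$ coefficient comes out to be at most $\nu/2$ and the residual coefficient of $|w|^2$ matches $r^4/\nu^3$. In particular one must verify that $B(w,v)$ can genuinely be controlled by $\|w\|_{l^4}$ and $\|v\|_{l^4}$ (rather than requiring $w \in V$ and $v \in V$ in a less favorable combination), since it is the $l^4$-ball condition on $v$ — not a $V$-norm bound — that supplies the $r$. Tracking the numerical constants through Young's inequality to land exactly on $\nu^3$ in the denominator is the delicate bookkeeping; the structural cancellations from Lemma \ref{Bprop1}(iv) are the easy part.
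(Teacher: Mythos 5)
Your proposal follows essentially the same route as the paper's proof: the same decomposition via Lemma \ref{Bprop2}, the same cancellation $(B(v,w),w)=(B(w,w),w)=0$ from Lemma \ref{Bprop1}(iv) leaving only $(B(w,v),w)$, the same use of Lemma \ref{La} plus Young's inequality to absorb $\frac{\nu}{2}\|w\|^2$, and the same final step with (A.3) and $\varepsilon<\frac{\nu}{2L}$. The one slip is your intermediate H\"older bound: because of the weight $k_n$ in $B$, the correct estimate is $|(B(w,v),w)|\leq C\|v\|_{l^4}\,\|w\|_{l^4}\,\|w\|_V$ (one factor of $w$ absorbs $k_n$ and lands in the $V$-norm), not $\|w\|_{l^4}^2\|v\|_{l^4}$; only the former, combined with $\|w\|_{l^4}\leq C|w|^{1/2}\|w\|^{1/2}$ and Young with exponents $4/3$ and $4$, produces the $\|v\|_{l^4}^4|w|^2/\nu^3$ scaling you correctly identify as the target (the literal bound you wrote would yield only $r^2/\nu$). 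Since you explicitly flag this as the delicate bookkeeping and state the correct final form, the argument is sound once that factor is fixed.
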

\begin{proof}
First note that, $$\nu(Aw, w) = \nu\|w\|^2.$$
Next using the Lemma \ref{Bprop2} and equation\eqref{1} from Lemma \ref{Bprop1}, we have
$$(B(u, u) - B(v, v), w) = (B(v, w) + B(w, v) + B(w, w), w) = (B(w, v), w).$$
Now using the definition of the operator $B$ and equation \eqref{L4} from Lemma \ref{La}, we get for $C >0$,
\begin{align*}
\big|(B(w, v), w)\big| &= \big|\sum_{n=1}^{\infty} ik_n\big[\frac{1}{4}v\s_{n-1}w\s_{n+1} w\s_{n} - \frac{1}{2}(w\s_{n+1} v\s_{n+2} + w\s_{n+2} v\s_{n+1})w\s_n + \\
&\quad + \frac{1}{8}w\s_{n-1} v\s_{n-2}w\s_n\big]\big|\\
&\leq C\|v\|_{l^4} \|w\|_{l^4} \|w\|\\
&\leq \|v\|_{l^4} |w|^{1/2} \|w\|^{3/2}\\
&\leq \frac{\nu}{2}\|w\|^2 + \frac{27}{32\nu^3} |w|^2 \|v\|_{l^4}^4.
\end{align*}
Since $v\in\mathbb{B}_r$, the above relation yields
$$- (B(w, v), w) \leq \frac{\nu}{2}\|w\|^2 + \frac{r^4}{\nu^3} |w|^2.$$
Hence by the definition of the operator $F$,
\begin{equation}\label{monotone2}
(F(u) - F(v), w) \leq -\frac{\nu}{2}\|w\|^2 + \frac{r^4}{\nu^3} |w|^2.
\end{equation}
Finally, using condition (A.3) and that $\varepsilon <\frac{\nu}{2L}$ we get the desired result.
\end{proof}

\subsection{Energy estimate and existence theory}
Let $H_n := \text{span}\ \{e_1, e_2, \cdots, e_n\}$ where $\{e_j\}$ is any fixed orthonormal basis in $H$ with each $e_j \in D(A)$. Let $P_n$ denote the orthogonal projection of $H$ to $H_n$.
Define $u^n = P_n u$, not to cause any confusion in notation with earlier $u_n$. Let $W_n = P_nW$. Let $\sigma_n = P_n\sigma $.
Define $\u$ as the solution of the following stochastic differential equation in the variational form such that for each $v \in H_n$,
\begin{equation}\label{variational}
\d (\u(t) , v) = (F(\u(t)), v)\d t + (f(t), v)\d t + \sqrt{\varepsilon}(\sigma_n(t, \u(t)) \d W_n(t), v),
\end{equation}
with $\u(0) = P_n u(0) $.

\begin{theorem}\label{energy}
Under the above mathematical setting let $f$ be in $\mathrm{L}^2( [0, T], H)$ and let $E|u (0)|^2 < \infty$.
Let $\u$ denote the unique strong solution of the stochastic differential equation \eqref{variational} in $C([0, T], H_n)$. Then with $K$ as in condition (A.2), the following estimates hold:

For all $\varepsilon < \frac{\nu}{2K}$, and $0 \leq t \leq T$,
\begin{align}
E| \u(t)|^2 + \frac{\nu}{2}\int_0^t E\| \u(s)\|^2 \d s \leq E|u(0)|^2 + \frac{1}{\nu} \int_0^t \|f(s)\|^2_{V^{\prime}}\d s + \varepsilon KT,\label{energy1}
\end{align}
and for all $\varepsilon < \frac{\nu}{2K} \wedge \frac{1}{2K^2}$
\begin{align}
E\Big[\sup_{0\leq t\leq T} |\u(t)|^2 + \frac{\nu}{2}\int_0^T \|\u(t)\|^2 \d t\Big] \leq C\Big(E|u(0)|^2, \int_0^T \|f(t)\|^2_{V^{\prime}} \d t, \nu, T\Big)\label{energy2}.
\end{align}
Also, for any $\delta > 0$ and $\varepsilon < \frac{3\nu}{2K}$,
\begin{align}
E|\u(t)|^2 e^{-\delta t} + \frac{\nu}{2}\int_0^T E\|\u(t)\|^2 e^{-\delta t} \d t &\leq E|u(0)|^2 + \frac{1}{\delta}\int_0^T |f(t)|^2 e^{-\delta t}\d t +\nonumber\\
&\quad + \frac{\varepsilon K}{\delta}.\label{energy3}
\end{align}
Moreover, if we suppose that $f \in \mathrm{L}^4( [0, T], H)$ and $E|u (0)|^4 < \infty$, then for all $\varepsilon < \frac{\nu}{3K}$ and $0 \leq t \leq T$,
\begin{align}
&E\Big[\sup_{0 \leq t \leq T}|\u(t)|^4 e^{-\delta t} + C_{\nu}\int_0^T \|\u(t)\|^2 |\u(t)|^2 e^{-\delta t} \d t\Big]\nonumber \\
&\quad\leq E|u(0)|^4 + C_{\delta, T}\int_0^T \|f (t)\|^4_{V^{\prime}} e^{-\delta t} \d t + \frac{\varepsilon M}{\delta}.\label{energy4}
\end{align}
\end{theorem}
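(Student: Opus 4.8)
The plan is to obtain all four estimates by applying It\^o's formula to suitable functionals of the finite-dimensional Galerkin process $\u$, and to exploit two structural facts recorded earlier: the antisymmetry $(B(u,u),u)=0$ from \eqref{1}, which gives $(F(\u),\u)=-\nu\|\u\|^2$ and thereby removes the nonlinearity altogether from the energy balance, together with the linear growth bound (A.2), $|\sigma_n(t,\u)|^2_{L_Q}\le K(1+\|\u\|^2)$. Since $\u$ lives in the finite-dimensional space $H_n$, It\^o's formula and every manipulation below are rigorously justified, and all constants come out uniform in $n$.

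For \eqref{energy1} I would apply It\^o to $|\u(t)|^2$. Using \eqref{variational} and $(F(\u),\u)=-\nu\|\u\|^2$, the drift is $-2\nu\|\u\|^2+2(f,\u)+\e|\sigma_n|^2_{L_Q}$ and the martingale part is $2\sqrt{\e}(\sigma_n\,\d W_n,\u)$. Taking expectation kills the martingale; then Young's inequality $2(f,\u)\le\nu\|\u\|^2+\tfrac{1}{\nu}\|f\|^2_{V'}$ together with (A.2) leaves $\tfrac{\d}{\d t}E|\u|^2\le(-\nu+\e K)E\|\u\|^2+\tfrac{1}{\nu}\|f\|^2_{V'}+\e K$, and the hypothesis $\e<\nu/(2K)$ forces $\e K<\nu/2$, so the dissipation coefficient stays below $-\nu/2$; integrating in time yields \eqref{energy1}. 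Estimate \eqref{energy3} repeats this computation for $e^{-\delta t}|\u(t)|^2$, where the extra term $-\delta e^{-\delta t}|\u|^2$ is used to absorb the forcing through $2(f,\u)\le\delta|\u|^2+\tfrac{1}{\delta}|f|^2$ (this is why the bound carries $|f|^2$ and a factor $1/\delta$); because no dissipation is now spent on the forcing, the relaxed threshold $\e<3\nu/(2K)$ is exactly what keeps $-2\nu+\e K\le-\nu/2$, while $\int_0^T\e K\,e^{-\delta t}\,\d t\le\e K/\delta$ produces the final term.

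The supremum bounds \eqref{energy2} and \eqref{energy4} are where the real work lies. For \eqref{energy2} I would keep the differential form of $|\u|^2$ \emph{before} taking expectations, pass to $\sup_{t}$, and estimate the stochastic integral $\sup_{t}\bigl|\int_0^t 2\sqrt{\e}(\sigma_n\,\d W_n,\u)\bigr|$ by the Burkholder--Davis--Gundy inequality; this produces a term bounded by $CE\bigl(\int_0^T\e|\sigma_n|^2_{L_Q}|\u|^2\,\d t\bigr)^{1/2}$, which a further Young inequality splits into a piece absorbable by $\tfrac12 E\sup_t|\u|^2$ on the left and a piece controlled via (A.2) and the already-established \eqref{energy1}. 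This absorption is the source of the additional smallness requirement $\e<1/(2K^2)$ in the hypothesis. Estimate \eqref{energy4} is obtained by applying It\^o to $e^{-\delta t}|\u|^4$, writing $\d|\u|^4=2|\u|^2\,\d|\u|^2+\d\langle|\u|^2\rangle$; besides $-4\nu|\u|^2\|\u\|^2$ this generates the extra quadratic-variation contribution $4\e|\sigma_n^{*}\u|^2\le 4\e K|\u|^2(1+\|\u\|^2)$, so the noise coefficient of $|\u|^2\|\u\|^2$ grows from $2\e K$ to $6\e K$, and $\e<\nu/(3K)$ is precisely what keeps the net coefficient negative after reserving dissipation for the forcing; the term $4|\u|^2(f,\u)$ is handled by iterated Young inequalities to yield the $\|f\|^4_{V'}$ contribution and an $|\u|^4$ term absorbed by $-\delta e^{-\delta t}|\u|^4$, and BDG is again invoked on the quartic martingale before absorbing its supremum on the left.

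The hard part will be the absorption step in the two supremum estimates: making the BDG-generated term of the form $E\bigl(\int_0^T\e|\sigma_n|^2_{L_Q}|\u|^2\,\d t\bigr)^{1/2}$ split cleanly, so that the $\sup_t|\u|^2$ factor can be moved to the left with coefficient strictly less than one, demands the sharp smallness conditions on $\e$ and careful tracking of constants. The fourth-moment case is additionally delicate because one must correctly account for the extra It\^o correction $4\e|\sigma_n^{*}\u|^2$, which is easy to overlook and is exactly what shifts the admissible range from $\nu/(2K)$ down to $\nu/(3K)$.
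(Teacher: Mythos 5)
Your proposal is correct and follows essentially the same route as the paper: It\^o's formula applied to $|\u|^2$, $e^{-\delta t}|\u|^2$ and $e^{-\delta t}|\u|^4$, cancellation of the nonlinearity via $(B(u,u),u)=0$, Young's inequality on the forcing, hypothesis (A.2) on the noise, and Burkholder--Davis--Gundy with an absorption argument for the two supremum bounds, with the same smallness thresholds on $\varepsilon$ arising in the same way. Your bookkeeping of the quartic It\^o correction (the $6\varepsilon K$ coefficient explaining the $\nu/(3K)$ threshold) is in fact slightly sharper than the paper's, which uses the cruder coefficient $8\varepsilon K$.
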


\begin{proof}
Replacing $v$ with $\u$ in \eqref{variational} and using the properties of the operators $A$ and $B$, we notice that,
\begin{align*}
&\d |\u(t)|^2 + 2\nu\|\u(t)\|^2 \d t \\
&\quad = 2(f(t),
\u(t))\d t + \varepsilon \Tr (\sigma_n(t,\u(t)) Q \sigma_n(t,\u(t)))\d t+\\
&\quad\quad + 2\sqrt{\varepsilon} (\sigma_n(t, \u(t)), \u(t)) \d W_n(t).
\end{align*}
Using the inequality $$ 2ab \leq \delta a^{2} + \frac{1}{\delta}
b^{2}$$ on $2 (f(t), \u(t))$ and using the condition (A.2), we obtain
\begin{align*}
\d |\u(t)|^2 + 2\nu\|\u(t)\|^2 \d t &\leq (\nu\|\u(t)\|^2 + \frac{1}{\nu}\|f(t)\|^2_{V^{\prime}}) \d t+ \varepsilon K(1 + \|\u(t)\|^2) \d t\\
&\quad + 2\sqrt{\varepsilon} (\sigma_n(t, \u(t)), \u(t)) \d W_n(t).
\end{align*}
Integrating in $0\leq t\leq T$ and taking the expectation and using
a stopping time argument, one can deduce
\begin{align*} E|\u(t)|^2 +
(\nu - \varepsilon K)\int_0^t\|\u(s)\|^2 \d s &\leq E|u(0)|^2 +
\frac{1}{\nu}\int_0^t\|f(s)\|^2_{V^{\prime}} \d s + \varepsilon K T.
\end{align*}
Now for all $\varepsilon < \frac{\nu}{2K}$, we get the desired result \eqref{energy1}.

\sni
To prove \eqref{energy2}, we proceed in the similar way as above, but we take supremum in time $0 \leq t\leq T$ before taking the expectation to get,
\begin{align}
&E\Big[\sup_{0\leq t\leq T}|\u(t)|^2 + \frac{\nu}{2}\int_0^T\|\u(t)\|^2 \d t\Big]\nonumber \\
&\quad\leq E|u(0)|^2 + \frac{1}{\nu}\int_0^T\|f(t)\|^2_{V^{\prime}} \d t + \varepsilon K T +\nonumber \\
&\quad\quad + 2\sqrt{\varepsilon} E\Big[\sup_{0\leq t\leq T}\Big|\int_0^t(\sigma_n(s, \u(s)), \u(s)) \d W_n(s)\Big|\Big].\label{3}
\end{align}
By means of Burkholder-Davis-Gundy inequality, condition (A.2) and Cauchy-Schwartz inequality,
\begin{align}
&2\sqrt{\varepsilon} E\Big[\sup_{0\leq t\leq T}\Big|\int_0^t(\sigma_n(s, \u(s)), \u(s)) \d W_n(s)\Big|\Big]\nonumber\\
&\quad\leq 2\sqrt{2\varepsilon} K E\Big[\Big(\int_0^{T} (1+\|\u(t)\|^2) |\u(t)|^2 \d t \Big)^{1/2}\Big]\nonumber\\
&\quad\leq 2\sqrt{2\varepsilon} K E\Big[\sup_{0\leq t\leq T} |\u(t)| \Big(\int_0^{T} (1+\|\u(t)\|^2) \d t\Big)^{1/2}\Big]\nonumber\\
&\quad\leq \sqrt{2\varepsilon} K \Big[E (\sup_{0\leq t\leq T} |\u(t)|^2) + E\int_0^{T} \|\u(t)\|^2 \d t + T\Big].\label{4}
\end{align}
Using \eqref{4} in \eqref{3}, one can get the desired energy estimate \eqref{energy2} if $\sqrt{2\varepsilon} K < 1$.

\sni
Next, we consider the function $e^{-\delta t} |\u(t)|^2$ for $\delta>0$ and apply the It\^{o} Lemma to get,
\begin{align}
&\d \Big[|\u(t)|^2 e^{-\delta t}\Big] + 2\nu\|\u(t)\|^2e^{-\delta t}\d t + \delta |\u(t)|^2 e^{-\delta t}\d t\nonumber \\
&\quad=\Big[ 2(f(t),
\u(t)) + \varepsilon \Tr (\sigma_n(t,\u(t)) Q \sigma_n(t,\u(t)))\Big]e^{-\delta t}\d t\nonumber\\
&\quad\quad + 2\sqrt{\varepsilon} (\sigma_n(t, \u(t)), \u(t))e^{-\delta t} \d W_n(t).\label{5}
\end{align}
Note that $$2(f(t),\u(t)) \leq \delta |\u(t)|^2 + \frac{1}{\delta} |f(t)|^2.$$
Hence upon writing \eqref{5} in the integral form, taking expectation and using condition (A.2), one can get
\begin{align*}
& E |\u(t)|^2 e^{-\delta t} + 2\nu E\int_0^T\|\u(t)\|^2e^{-\delta t}\d t\nonumber \\
&\quad\leq E|u(0)|^2 + \frac{1}{\delta}\int_0^T |f(t)|^2 e^{-\delta t} \d t + \varepsilon K E\int_0^T (1+\|\u(t)\|^2 e^{-\delta t} \d t,
\end{align*}
which yields the estimate \eqref{energy3} for all $\e < \frac{3\nu}{2K}$.

\sni
To prove \eqref{energy4}, we first apply It\^{o} Lemma on the function $|\u(t)|^4 e^{-\delta t}$ to get,
\begin{align}
&\d \Big[|\u(t)|^4 e^{-\delta t}\Big] + 4\nu\|\u(t)\|^2 |\u(t)|^2 e^{-\delta t}\d t + \delta |\u(t)|^4 e^{-\delta t}\d t\nonumber \\
&\quad=|\u(t)|^2\Big[4(f(t),
\u(t)) + 8\varepsilon \Tr (\sigma_n(t,\u(t)) Q \sigma_n(t,\u(t)))\Big]e^{-\delta t}\d t\nonumber\\
&\quad\quad + 4\sqrt{\varepsilon} (\sigma_n(t, \u(t)), \u(t)) |\u(t)|^2 e^{-\delta t} \d W_n(t).\label{6}
\end{align}
Using the fact that $$4(f(t), \u(t))|\u(t)|^2 \leq C_{\delta} |f(t)|^4 + \delta |\u(t)|^4,$$
and applying the condition (A.2) and integrating we have,
\begin{align*}
&|\u(t)|^4 e^{-\delta t} +(4\nu -8\e K)\int_0^t\|\u(s)\|^2 |\u(s)|^2 e^{-\delta s}\d s\nonumber \\
&\quad\leq E|u(0)|^4 + C_{\delta}\int_0^t |f(s)|^4 e^{-\delta s}\d s\nonumber\\
&\quad\quad + 4\sqrt{\e} \int_0^t(\sigma_n(s, \u(s)), \u(s)) |\u(s)|^2 e^{-\delta s} \d W_n(s).
\end{align*}
Finally taking supremum in $0\leq t\leq T$, then taking expectation on both sides and using the Burkholder-Davis-Gundy inequality on the stochastic integral term, we get the estimate \eqref{energy4}.
\end{proof}

\begin{definition}($Strong\ Solution$)
A strong solution $\ue$ is defined on a given probability
space $(\Omega, \mathcal{F}, \mathcal{F}_{t}, P)$ as a
$\mathrm{L}^2(\Omega;\mathrm{L}^{\infty
}(0,T; H)\cap
\mathrm{L}^2(0,T; V)\cap
C(0,T; H))$ valued function which
satisfies the stochastic GOY model
\begin{align}\label{7}
&\d \ue + \big[\nu A\ue + B(\ue, \ue)\big] \d t = f(t) \d t + \sqrt{\e}\sigma(t, \ue) \d W(t)\\
& \ue(0) = u_0,\nonumber
\end{align}
 in the
weak sense and also the energy inequalities in Theorem \ref{energy}.
\end{definition}
Monotonicty arguments were first used by Krylov and
Rozovskii\cite{KR} to prove the existence and uniqueness of the
strong solutions for a wide class of stochastic evolution equations
(under certain assumptions on the drift and diffusion coefficients),
which in fact is the refinement of the previous results by
Pardoux\cite{Pa1, Pa2} and also the generalization of the results by
Bensoussan and Temam\cite{Be}. Menaldi and Sritharan\cite{Ms}
further developed this theory for the case when the sum of the
linear and nonlinear operators are locally monotone.


\begin{theorem}\label{existence}
Let the data $f$ and $u_0$ be such that
$$f\in \mathrm{L}^4(0, T; V^{\prime}),\ E|u_0|^4 < \infty.$$
We also assume that $0 < \e < \frac{\nu}{L}$ and the diffusion coefficient satisfies the conditions (A.1)-(A.3). Then almost surely there exists a unique adapted process $\ue(t, x, w)$ with the regularity
$$\ue \in \mathrm{L}^2(\Omega; C(0, T; H) \cap \mathrm{L}^2(0, T; V))$$
satisfying the stochastic GOY model \eqref{7} and the a priori bounds in Theorem \ref{energy}.
\end{theorem}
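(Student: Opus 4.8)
The plan is to construct $\ue$ by the Faedo--Galerkin method combined with the local monotonicity of Lemma \ref{Mon}, in the spirit of Krylov--Rozovskii and Menaldi--Sritharan; because we have monotonicity rather than mere compactness, the whole argument can be carried out on the fixed probability space without recourse to a Skorokhod representation. First I would record the uniform bounds. By Theorem \ref{energy} the Galerkin solutions $\u$ of \eqref{variational} are bounded, uniformly in $n$, in $\mathrm{L}^2(\Omega;\mathrm{L}^\infty(0,T;H))\cap\mathrm{L}^2(\Omega\times(0,T);V)$ and, through \eqref{energy4}, in $\mathrm{L}^4(\Omega;\mathrm{L}^\infty(0,T;H))$. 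Since $\|A\u\|_{V^{\prime}}=\|\u\|$ and $\|B(\u,\u)\|_{V^{\prime}}\leq C_3|\u|^2$ by Lemma \ref{Bprop1}(ii), the drift $F(\u)$ is bounded in $\mathrm{L}^2(\Omega\times(0,T);V^{\prime})$, while (A.2) bounds $\sigma_n(\cdot,\u)$ in $\mathrm{L}^2(\Omega\times(0,T);L_Q)$. Reflexivity and Banach--Alaoglu then furnish a subsequence (not relabelled) with $\u\rightharpoonup\ue$ in $\mathrm{L}^2(\Omega\times(0,T);V)$ and weak-$*$ in $\mathrm{L}^2(\Omega;\mathrm{L}^\infty(0,T;H))$, $F(\u)\rightharpoonup F_0$ in $\mathrm{L}^2(\Omega\times(0,T);V^{\prime})$, and $\sigma_n(\cdot,\u)\rightharpoonup S$ in $\mathrm{L}^2(\Omega\times(0,T);L_Q)$.

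Passing to the limit in \eqref{variational}, which is affine in $F(\u)$ and $\sigma_n(\cdot,\u)$ once paired with a fixed basis element, shows that $\ue$ solves, weakly and with $\ue(0)=u_0$, the equation $\d\ue=(F_0+f)\,\d t+\sqrt{\e}\,S\,\d W$. It remains to identify the weak limits as $F_0=F(\ue)$ and $S=\sigma(\cdot,\ue)$; this Minty--Browder step, in which the local (rather than global) monotonicity must be exploited through a $v$-dependent random weight, is where I expect the main obstacle to lie.

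To carry it out, fix an adapted test process $v$ with values in $\mathbb{B}_{\rho(t)}$, $\rho(t)=\|v(t)\|_{l^4}$, and introduce the random weight $r(t)=\frac{2}{\nu^3}\int_0^t\|v(s)\|_{l^4}^4\,\d s$, which is a.s. finite by Lemma \ref{La}. The doubled form of \eqref{monotone}, namely $2(F(\u)-F(v),\u-v)+\e|\sigma_n(\u)-\sigma(v)|^2_{L_Q}-r^{\prime}(t)|\u-v|^2\leq 0$ (obtained from \eqref{monotone2} and (A.3) exactly as in Lemma \ref{Mon}), guarantees that $E\int_0^T e^{-r}[2(F(\u)-F(v),\u-v)+\e|\sigma_n(\u)-\sigma(v)|^2_{L_Q}-r^{\prime}|\u-v|^2]\,\d t\leq 0$ for every $n$. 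Expanding the square and the pairing and eliminating the pure-$\u$ combination $2(F(\u),\u)+\e|\sigma_n(\u)|^2_{L_Q}-r^{\prime}|\u|^2$ via the It\^o identity for $e^{-r}|\u|^2$, every remaining term is affine in $(\u,F(\u),\sigma_n(\cdot,\u))$ except the endpoint term $E[e^{-r(T)}|\u(T)|^2]$, which is weakly lower semicontinuous. Letting $n\to\infty$ and inserting the It\^o identity for $e^{-r}|\ue|^2$ from the limit equation, the affine terms pass to their limits while lower semicontinuity yields
\[E\int_0^T e^{-r}\big[2(F_0-F(v),\ue-v)+\e|S-\sigma(v)|^2_{L_Q}-r^{\prime}|\ue-v|^2\big]\,\d t\leq 0.\]
Two choices of $v$ now complete the identification. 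Taking $v=\ue$ collapses the pairing and weight terms and leaves $E\int_0^T e^{-r}\e|S-\sigma(\ue)|^2_{L_Q}\,\d t\leq 0$, so $S=\sigma(\cdot,\ue)$; then substituting $v=\ue-\lambda\phi$ for $\lambda>0$ and an arbitrary bounded adapted $\phi$, using (A.3) to absorb the now $O(\lambda^2)$ diffusion term, dividing by $\lambda$ and letting $\lambda\downarrow 0$ (so $F(\ue-\lambda\phi)\to F(\ue)$ by hemicontinuity of $F$) gives $E\int_0^T e^{-r_0}(F_0-F(\ue),\phi)\,\d t\leq 0$; replacing $\phi$ by $-\phi$ forces $F_0=F(\ue)$. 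Hence $\ue$ solves \eqref{7}.

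Uniqueness uses \eqref{monotone} directly. For two solutions with $w=u_1-u_2$ set $\psi(t)=\exp(-\frac{2}{\nu^3}\int_0^t\|u_2(s)\|_{l^4}^4\,\d s)$, which is a.s. positive since $\int_0^T\|u_2\|_{l^4}^4\,\d s<\infty$ by Lemma \ref{La} and \eqref{energy2}. It\^o's formula for $\psi|w|^2$, together with the bound $2(F(u_1)-F(u_2),w)+\e|\sigma(u_1)-\sigma(u_2)|^2_{L_Q}\leq\frac{2}{\nu^3}\|u_2\|_{l^4}^4|w|^2$ (Lemma \ref{Mon} with $r=\|u_2\|_{l^4}$), makes the drift of $\psi|w|^2$ nonpositive, so $E[\psi(t)|w(t)|^2]\leq E|w(0)|^2=0$ and $w\equiv0$. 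Finally, the a priori estimates of Theorem \ref{energy} descend to $\ue$ by weak lower semicontinuity of the norms, and the pathwise continuity $\ue\in C(0,T;H)$ follows from the limit equation \eqref{7} by the standard argument producing an $H$-valued continuous modification; this gives the asserted regularity $\ue\in\mathrm{L}^2(\Omega;C(0,T;H)\cap\mathrm{L}^2(0,T;V))$ and completes the proof.
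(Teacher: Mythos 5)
Your proposal is correct and follows essentially the same route as the paper: Galerkin approximation, the a priori bounds of Theorem \ref{energy} plus Banach--Alaoglu to extract weak limits $(\ue, F_0, S)$, a Minty--Browder argument with the exponential weight $e^{-r(t)}$ and the local monotonicity of Lemma \ref{Mon} to identify $S=\sigma(\cdot,\ue)$ and $F_0=F(\ue)$, and uniqueness via the same weighted energy estimate. The only (cosmetic) divergence is that you build the weight $r$ from the test process $v$ while the paper builds it from the limit $\ue$; your choice is if anything more faithful to the hypothesis of Lemma \ref{Mon}, which requires the second argument to lie in the $l^4$-ball of the relevant radius.
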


\begin{proof}
Using the a priori estimate in the Theorem \ref{energy}, it follows from the Banach-Alaoglu theorem
that along a subsequence, the Galerkin approximations
$\{\u\}$ have the following limits:
\begin{align}
&\u\longrightarrow \ue\quad  \text {weak star in}\ \mathrm{L}^4(\Omega ;
  \mathrm{L}^{\infty}(0,T; H)) \cap\mathrm{L}^{2}
  (\Omega;\mathrm{L}^{2}(0,T; V)),\nonumber\\
& F(\u)\longrightarrow F^{\e}_0\quad \text{weakly in}\
\mathrm{L}^{2}(\Omega;\mathrm{L}^{2}(0,T; V^{\prime})),\nonumber\\
& \sigma_n(\cdot, \u) \longrightarrow S^{\e}\quad \text{weakly in}\ \mathrm{L}^{2}(\Omega;\mathrm{L}^{2}(0,T; \mathrm{L}_Q)).
\end{align}
The assertion of the second statement holds since $F(\u)$ is bounded in \\ $\mathrm{L}^{2}(\Omega;\mathrm{L}^{2}(0,T; V^{\prime}))$. Likewise since diffusion coefficient has the linear growth property and $\u$ is bounded in $\mathrm{L}^2(0, T; V)$ uniformly in $n$, the last statement holds. Then $\ue$ has the It\^{o} differential
\begin{align*}
\d \ue(t) = F^{\e}_0(t)\d t + \sqrt{\e} S^{\e}(t)\d W(t)\quad \text{weakly in}\
\mathrm{L}^{2}(\Omega;\mathrm{L}^{2}(0,T; V^{\prime})).
\end{align*}
Let us set,
\begin{equation}\label{9}
r(t):= \frac{2}{\nu^3}\int_0^t \|\ue(s)\|_{\mathrm{L}^4}^4 \d s.
\end{equation}
Here we suppress the dependence of $\varepsilon$ in the notation of $r$ to make it easier to read. Then applying the It\^{o} Lemma to the function $2e^{-r(t)} |\u(t)|^2$, one obtains
\begin{align*}
\d \big[e^{-r(t)} |\u(t)|^2\big] &= e^{-r(t)}\big(2F(\u(t)) - \dot{r}(t) \u(t), \u(t)\big)\d t +\\
&\quad + \e e^{-r(t)} |\sigma_n(t, \u(t))|^2_{\mathrm{L}_Q}\d t +\\
& \quad + 2\sqrt{\e} e^{-r(t)}\big(\sigma_n(t, \u(t)), \u(t)\big)\d W(t).
\end{align*}
Integrating between $0 \leq t\leq T$ and taking expectation,
\begin{align*}
&E\big[e^{-r(T)} |\u(T)|^2 - |\u(0)|^2\big]\\
&\quad= E\big[\int_0^T e^{-r(t)}\big(2F(\u(t)) - \dot{r}(t) \u(t), \u(t)\big)\d t\big]+\\
&\quad\quad +\e E\int_0^T e^{-r(t)} |\sigma_n(t, \u(t))|^2_{\mathrm{L}_Q}\d t+\\
&\quad\quad + 2\sqrt{\e} E\int_0^T e^{-r(t)}\big(\sigma_n(t, \u(t)), \u(t)\big)\d W(t).
\end{align*}
The last term on the right hand side vanishes since the integral inside the expectation is a martingale. Then by the lower semi-continuity property of the weak convergence,
\begin{align}
&\liminf_n E\big[\int_0^T e^{-r(t)}\big(2F(\u(t)) - \dot{r}(t) \u(t), \u(t)\big)\d t +\nonumber \\
&\quad +\e \int_0^T e^{-r(t)} |\sigma_n(t, \u(t))|^2_{\mathrm{L}_Q}\d t\big]\nonumber\\
&\quad\quad = \liminf_n E\big[e^{-r(T)} |\u(T)|^2 - |\u(0)|^2\big]\nonumber\\
&\quad\quad \geq E\big[e^{-r(T)} |\ue(T)|^2 - |\ue(0)|^2\big]\nonumber\\
&\quad\quad = E\big[\int_0^T e^{-r(t)}\big(2F_0^{\e}(t) - \dot{r}(t) \ue(t), \ue(t)\big)\d t +\e \int_0^T e^{-r(t)} |S^{\e}|^2_{\mathrm{L}_Q}\d t\big].\label{lsc}
\end{align}
Now by monotonicity property from Lemma \ref{Mon},
\begin{align*}
& 2E\big[\int_0^T e^{-r(t)} \big(F(\u(t)) - F(\ve(t)), \u(t) - \ve(t)\big)\d t\big] - \\
&\quad - E\big[\int_0^T e^{-r(t)} \dot{r}(t)|\u(t) - \ve(t)|^2 \d t\big] + \\
&\quad + \e E\big[\int_0^T e^{-r(t)} |\sigma_n(t, \u(t)) - \sigma_n(t, \ve(t))|^2_{\mathrm{L}_Q}\d t\big]\\
&\quad\quad \leq 0.
\end{align*}
Rearranging the terms,
\begin{align*}
& E\big[\int_0^T e^{-r(t)}\big(2F(\u(t)) - \dot{r}(t) \u(t), \u(t)\big)\d t +\\
&\quad +\e \int_0^T e^{-r(t)} |\sigma_n(t, \u(t))|^2_{\mathrm{L}_Q}\d t\big]\\
&\quad\quad\leq E\big[\int_0^T e^{-r(t)}\big(2F(\u(t))-\dot{r}(t)(2\u(t) - \ve(t)), \ve(t)\big) \d t\big] +\\
&\quad\quad\quad + E\big[\int_0^T e^{-r(t)}\big(2F(\ve(t)), \u(t)-\ve(t)\big) \d t\big] +\\
&\quad\quad\quad + \e E\big[\int_0^T e^{-r(t)}\big(2\sigma_n(t, \u(t))-\sigma_n(t, \ve(t)), \sigma_n(t, \ve(t))\big)_{\mathrm{L}_Q}\d t\big].
\end{align*}
Taking limit in $n$, using the result from \eqref{lsc} and rearranging, we obtain
\begin{align*}
&E\big[\int_0^T e^{-r(t)}\big(2F_0^{\e}(t)-2F(\ve(t)), \ue(t)-\ve(t)\big)\d t\big] +\\
&\quad + E\big[\int_0^T e^{-r(t)}\dot{r}(t) |\ue(t) - \ve(t)|^2 \d t\big]+\\
&\quad + \e E\big[\int_0^T e^{-r(t)} \|S(t)-\sigma(t, \ve(t))\|^2_{\mathrm{L}_Q}\d t\big]\\
&\quad\quad \leq 0.
\end{align*}
Notice that for $\ve=\ue$, $S(t)=\sigma(t, \ue(t))$. Take $\ve = \ue - \lambda\we$ with $\lambda >0$ and $\we$ is an adapted process in $\mathrm{L}^2(\Omega; C(0, T; H) \cap \mathrm{L}^2(0, T; V))$ Then,
\begin{align*}
&\lambda E\big[\int_0^T e^{-r(t)}\big(2F_0^{\e}(t)-2F(\ue - \lambda\we)(t), \we(t)\big)\d t + \lambda\int_0^T e^{-r(t)}\dot{r}(t)|\we(t)|^2\d t\big]\\ &\quad\leq 0.
\end{align*}
Dividing by $\lambda$ on both sides of the inequality above and letting $\lambda$ go to $0$, one obtains
\begin{align*}
E\big[\int_0^T e^{-r(t)}\big(F_0^{\e}(t)-F(\ue(t)), \we(t)\big)\d t\big] \leq 0.
\end{align*}
Since $\we$ is arbitrary, we conclude that $F_0^{\e}(t)=F(\ue(t))$. Thus the existence of the strong solution of the stochastic GOY model \eqref{7} has been proved.

If $\ve\in \mathrm{L}^2(\Omega; C(0, T; H) \cap \mathrm{L}^2(0, T; V))$ be another solution of the equation \eqref{7} then $\we=\ue-\ve$ solves the stochastic differential equation in $\mathrm{L}^2(\Omega;\mathrm{L}^2(0, T; V^{\prime}))$,
\begin{equation}\label{8}
\d \we(t) = (F(\ue(t)) - F(\ve(t)))\d t + \sqrt{\e}(\sigma(t, \ue(t)) - \sigma(t, \ve(t))) \d W(t).
\end{equation}
We denote $\sigma_d = \sigma(t, \ue(t)) - \sigma(t, \ve(t))$.

We now apply It\^{o} Lemma to the function $2e^{-r(t)} |\we(t)|^2$
and using the local monotonicity of the sum of the linear and
nonlinear operators $A$ and $B$, e.g. equation \eqref{monotone2}, we
get
\begin{align}
e^{-r(t)} \d |\we(t)|^2 + \nu e^{-r(t)} \|\we(t)\|^2  \d t &\leq \frac{2r^4}{\nu^3}e^{-r(t)} |\we(t)|^2 \d t + \e e^{-r(t)} \Tr(\sigma_d Q \sigma_d) \d t + \nonumber\\
&\quad + 2\sqrt{\e} e^{-r(t)} (\sigma_d, \we(t)) \d W(t).
\end{align}
Using condition (A.3),
\begin{align}
\d (e^{-r(t)}|\we(t)|^2)+ \nu e^{-r(t)} \|\we(t)\|^2  \d t &\leq \e L e^{-r(t)} \|\we(t)\|^2 \d t + \nonumber\\
&\quad + 2\sqrt{\e} e^{-r(t)} (\sigma_d, \we(t)) \d W(t).
\end{align}
Finally integrating in $0\leq t\leq T$, taking expectation on both sides and noting $\e < \frac{\nu}{L}$ and the fact that
$$\int_0^T e^{-r(t)} (\sigma_d, \we(t)) \d W(t)$$ is a martingale for $T < \infty$, we obtain P-a.s.
\begin{align*}
E\big[e^{-r(t)}|\we(t)|^2\big] \leq E|w(0)|^2,
\end{align*}
which assures the uniqueness of the strong solution.
\end{proof}

\section{Large Deviation Principle Continued}
\setcounter{equation}{0}
Let us recall the stochastic GOY model in consideration,
\begin{align}\label{10}
&\d \ue + \big[\nu A\ue + B(\ue, \ue)\big] \d t = f(t) \d t + \sqrt{\e}\sigma(t, \ue) \d W(t)\\
& \ue(0) = \xi,\nonumber
\end{align}
The aim of this section is to prove the LDP for $\{\ue : \e>0\}$ in
$X=C([0, T]; H) \cap \mathrm{L}^2(0, T; V)$ by verifying the
Assumption \ref{assum} and then applying the Theorem \ref{main},
which has already been mentioned in Remark \ref{rem1}.

The LDP for $\{\ue : \e>0\}$in $X$ have been proved here systematically in four steps. In the first and second Theorems we show the well
posedness of certain controlled stochastic and controlled deterministic equations in $X$. These results help to prove the last two main Theorems
on the compactness of the level sets and weak convergence of the stochastic control equation stated in Assumption \ref{assum}.

\begin{theorem}\label{scontrol}
Let the family $\mathcal{G}^{\e}$ be defined as in Section 2.  For
any $v \in \mathcal{A}_M$, where $0 < M < \infty$, let
$\mathcal{G}^{\e}\big(W(\cdot) + \frac{1}{\sqrt{\e}}\
\int_0^{\cdot}\ v(s)\d s\big)$ be denoted by $\uv$ where $\uv(0) =
\xi\in H$. Then
\begin{align}
\d \uv(t) + [\nu A\uv(t) + B(\uv(t), \uv(t))]\d t &= [f(t) + \st(t,
\uv(t))v(t)]\d t + \nonumber\\
&\quad + \sqrt{\e}\sigma(t, \uv(t)) \d W(t),\label{11}
\end{align}
has a unique strong solution in $\mathrm{L}^2(\Omega; X)$, where
$X=C(0, T; H) \cap \mathrm{L}^2(0, T; V)$, $f \in \mathrm{L}^4(0, T;
V^{\prime})$ and $\sigma, \st$ both satisfy the hypotheses
A.1.--A.3. in Section 3.
\end{theorem}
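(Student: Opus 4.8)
The plan is to follow the scheme used in the proof of Theorem \ref{existence}, viewing the controlled equation \eqref{11} as the uncontrolled GOY model \eqref{7} perturbed by the additional drift $\st(t, \uv(t))v(t)$, and to absorb this perturbation into the energy and monotonicity estimates by exploiting the integrability $\int_0^T |v(s)|_0^2\,\d s \leq M$ guaranteed by $v \in \mathcal{A}_M$. First I would set up the Galerkin approximations $u^{n}$ (the finite-dimensional analogues of $\uv$), solving the variational problem \eqref{variational} with the extra term $P_n\st(t, u^{n})v$ adjoined to the drift, and establish the a priori bounds of Theorem \ref{energy}. The only genuinely new ingredient is the control of the forcing-type term: using the elementary operator estimate $|\st(t, u)v|_H \leq |\st(t, u)|_{L_Q}\,|v|_0$ (valid since $v = Q^{1/2}h$ and $\st(t,u)Q^{1/2}$ is Hilbert--Schmidt) together with condition (A.2),
\begin{align*}
2(\st(t, u^{n})v, u^{n}) &\leq 2\sqrt{K(1 + \|u^{n}\|^2)}\,|v|_0\,|u^{n}| \\
&\leq \frac{\nu}{2}\|u^{n}\|^2 + C(1 + |v|_0^2)\,|u^{n}|^2 + C|v|_0^2,
\end{align*}
so that the gradient part is absorbed by the viscosity while the remaining terms carry the coefficient $C(1+|v(\cdot)|_0^2)$, which is integrable on $[0,T]$. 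After the usual stopping-time and Burkholder--Davis--Gundy arguments, Gronwall's inequality with this integrable weight closes the estimates uniformly in $n$; the higher-moment bound \eqref{energy4} follows analogously, the control term being handled in the same way.

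Next, the Banach--Alaoglu theorem yields a subsequence with weak and weak-star limits $\uv$, $F_0^{\e}$ and $S^{\e}$ exactly as in the proof of Theorem \ref{existence}, and $\uv$ satisfies the limiting It\^o equation weakly in $\mathrm{L}^2(\Omega; \mathrm{L}^2(0,T;V'))$. To identify these limits I would repeat the local-monotonicity argument based on Lemma \ref{Mon}, but with the exponential weight enlarged to
\[
r(t) := \frac{2}{\nu^3}\int_0^t \|\uv(s)\|_{l^4}^4\,\d s + \frac{C}{\nu}\int_0^t |v(s)|_0^2\,\d s,
\]
the additional term being engineered so that the Lipschitz bound (A.3), in the form $|(\st(t, u) - \st(t, y))v(t)|_H \leq \sqrt{L}\,\|u - y\|\,|v(t)|_0$, is dominated by $\dot r(t)|u - y|^2$ after a Young's inequality absorbs the factor $\|u-y\|^2$ into the viscosity. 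With this choice the full effective drift $F(\cdot) + \st(t,\cdot)v$ remains monotone in the sense of \eqref{monotone2}, and the same $\liminf$ and lower-semicontinuity computation as in Theorem \ref{existence} forces $F_0^{\e} = F(\uv)$ and $S^{\e} = \sigma(\cdot, \uv)$, thereby proving existence of a strong solution in $\mathrm{L}^2(\Omega; X)$.

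Finally, uniqueness proceeds as in Theorem \ref{existence}: if $y \in \mathrm{L}^2(\Omega; X)$ is a second solution, the difference $\we := \uv - y$ solves the corresponding difference equation, and applying the It\^o Lemma to $e^{-r(t)}|\we(t)|^2$ with the augmented weight $r(t)$ above, the nonlinear contribution is controlled by local monotonicity while the control contributes a term bounded by $C|v(t)|_0^2\,|\we(t)|^2$, which is precisely what $\dot r(t)|\we(t)|^2$ absorbs. Taking expectations and using that the stochastic integral is a martingale for $T < \infty$ then gives $E\big[e^{-r(t)}|\we(t)|^2\big] \leq E|\we(0)|^2 = 0$.

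The main obstacle is exactly the treatment of the control term, which is only square-integrable in time rather than bounded, so it cannot be handled as an ordinary forcing; every estimate must be arranged so as to produce the integrable coefficient $|v(t)|_0^2$ and be closed by Gronwall's inequality, and it is the incorporation of $\int_0^t |v(s)|_0^2\,\d s$ into the exponential weight $r(t)$ that is the key device keeping the local-monotonicity machinery of Lemma \ref{Mon} applicable throughout both the existence and the uniqueness arguments.
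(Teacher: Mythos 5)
Your proposal is correct and follows the same overall skeleton as the paper's argument: an a priori energy estimate for \eqref{11}, followed by the Galerkin/local-monotonicity machinery of Theorem \ref{existence} adapted to the extra drift. The one place where you genuinely diverge is the treatment of the control term, and the divergence is worth noting. The paper estimates
$\frac{4K}{\nu}\int_0^t |v(s)|_0^2\,|\uv(s)|^2\,\d s \leq \frac{4KM}{\nu}\sup_{0\leq t\leq T}|\uv(t)|^2$
and absorbs the result into the left-hand side of \eqref{16}; this silently requires $4KM/\nu<1$ (visible in the paper's admissibility condition $\e<(1-4KM/\nu)^2/(2K^2)$), i.e.\ a smallness restriction on $M$ that is not present in the statement of the theorem. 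You instead keep $|v(t)|_0^2$ as an integrable Gronwall weight, which works for arbitrary $M$ since $\int_0^T|v|_0^2\,\d s\leq M$ holds pathwise and hence the Gronwall constant $e^{C(T+M)}$ is deterministic; the price is that the Gronwall step must be performed pathwise before taking the expectation of the supremum and applying Burkholder--Davis--Gundy, a point you gloss over but which is routine. Your second contribution is to make explicit what the paper dismisses as ``a few modifications'': augmenting the exponential weight $r(t)$ of \eqref{9} by $\frac{C}{\nu}\int_0^t|v(s)|_0^2\,\d s$ so that the Lipschitz contribution of $\st(t,\cdot)v$ (via (A.3) and Young's inequality) is dominated by $\dot r(t)|u-y|^2$, keeping the pair locally monotone in both the limit-identification and uniqueness steps. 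This is exactly the right device, and your version of the proof is, if anything, more complete and slightly more general than the one in the paper.
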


\begin{proof}
We first prove that if $\uv (t)$ is a strong solution of the
stochastic controlled equation \eqref{11}, the following energy
estimate holds:
\begin{align}\label{12}
E\Big(\sup_{0\leq t\leq T} |\uv (t)|^2 + \int_0^T \|\uv (t)\|^2 \d
t\Big) \leq C,
\end{align}
where $$C = C\big(|\xi|^2, \int_0^T \|f\|^2_{V^{\prime}} \d t, \nu,
K, T, M\big)$$ is a positive constant.

To prove the above estimate, we start by taking the inner product of
the equation \eqref{11} with $\uv(t)$ and integrating in $0\leq
t\leq T$,
\begin{align}\label{13}
&|\uv(t)|^2 + \nu\int_0^t \|\uv(s)\|^2 \d s\nonumber \\
&\quad\leq |\xi|^2 + \frac{1}{\nu}\int_0^t \|f(s)\|^2_{V^{\prime}}
\d
s + 2\int_0^t \big(\st(s, \uv(s))v(s), \uv(s)\big)\d s + \nonumber\\
&\quad\quad +\e\int_0^t \Tr(\sigma(s, \uv(s)) Q \sigma(s, \uv(s)))
\d s + 2\sqrt{\e}\int_0^t \big(\sigma(s, \uv(s)), \uv(s)\big) \d
W(s).
\end{align}
Notice that,
\begin{align}
&2\int_0^t \big(\st(s, \uv(s))v(s), \uv(s)\big)\d s\nonumber\\
&\quad \leq 2\int_0^t |\st(s, \uv(s))|_{\mathrm{L}_Q} |v(s)|_0
|\uv(s)| \d s\nonumber\\
&\quad\leq \frac{\nu}{4K}\int_0^t |\st(s, \uv(s))|_{\mathrm{L}_Q}^2
\d s + \frac{4K}{\nu}\int_0^t |v(s)|_0^2 |\uv(s)|^2 \d s\nonumber\\
&\quad\leq \frac{\nu}{4}\int_0^t (1+\|\uv(s)\|^2) \d s +
\frac{4K}{\nu}\int_0^t |v(s)|_0^2 |\uv(s)|^2 \d
s\nonumber\\
&\quad\leq \frac{\nu}{4}\int_0^t (1+\|\uv(s)\|^2) \d s +
\frac{4KM}{\nu}\sup_{0\leq t\leq T} |\uv(t)|^2,\label{14}
\end{align}
and for $\e < \frac{\nu}{4K}$,
\begin{align}
\e\int_0^t \Tr(\sigma(s, \uv(s)) Q \sigma(s, \uv(s))) \d s \leq
\frac{\nu}{4}\int_0^t (1+\|\uv(s)\|^2) \d s.\label{15}
\end{align}
After rearrangement of the equation \eqref{13}, we take supremum in
time $0\leq t\leq T$ and then expectation to get,
\begin{align}
&E\Big[\sup_{0\leq t\leq T}|\uv(t)|^2 + \frac{\nu}{2}\int_0^T
\|\uv(t)\|^2 \d t\Big]\nonumber\\
&\quad\leq |\xi|^2 + \frac{\nu}{2}T + \frac{1}{\nu}\int_0^T
\|f(t)\|^2_{V^{\prime}} \d t + \frac{4KM}{\nu}E\Big[\sup_{0\leq
t\leq
T} |\uv(t)|^2\Big]+\nonumber\\
&\quad\quad + 2\sqrt{\e}E\Big[\sup_{0\leq t\leq T}\Big|\int_0^t
\big(\sigma(s, \uv(s)), \uv(s)\big) \d W(s)\Big|\Big].\label{16}
\end{align}
The last term of the above equation can be estimated in a similar
manner as in \eqref{4},
\begin{align}
&2\sqrt{\e}E\Big[\sup_{0\leq t\leq T}\Big|\int_0^t \big(\sigma(s,
\uv(s)), \uv(s)\big) \d W(s)\Big|\Big]\nonumber\\
&\quad\leq \sqrt{2\e}K\Big[E(\sup_{0\leq t\leq T} |\uv(t)|^2) +
E\int_0^T \|\uv(t)\|^2\d t + T\Big].\label{17}
\end{align}
Replacing \eqref{17} in \eqref{16}, and considering $$\e < \min
\Big\{\frac{\nu}{4K}, \frac{\nu^2}{8K^2},
\frac{(1-4KM/\nu)^2}{2K^2}\Big\},$$ we get the energy estimate
\eqref{12}.

The proof of the existence and uniqueness of the strong solution of
the stochastic controlled equation \eqref{11} follow from the
Theorem \ref{existence}, only a few modifications are needed due to
the presence of the control term.
\end{proof}

\begin{theorem}\label{dcontrol}
Let $v \in \mathrm{L}^2(0, T; H_0)$, $f \in \mathrm{L}^4(0, T;
V^{\prime})$ and $\sigma$ satisfy the hypotheses A.1.--A.3. in
Section 3. Then $u_v \in X=C(0, T; H) \cap \mathrm{L}^2(0, T; V)$ is
the unique strong solution of the equation
\begin{align}\label{18}
\d u_v(t) + [\nu Au_v(t)+ B(u_v(t), u_v(t))]\d t = f(t)\d t + \sigma(t,
u_v(t)) v(t) \d t,
\end{align}
where $u_v(0)=\xi\in H$.
\end{theorem}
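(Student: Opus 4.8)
The plan is to obtain \eqref{18} as the deterministic specialization of the stochastic controlled equation of Theorem \ref{scontrol}, following the same Galerkin--energy--monotonicity scheme as in Theorems \ref{scontrol} and \ref{existence} but with the stochastic integral, the expectations and the Burkholder--Davis--Gundy estimates removed, and with the additional deterministic drift $\sigma(t,u_v)v$ absorbed through the hypotheses A.2--A.3. First I would project \eqref{18} onto $H_n$; Carath\'eodory theory yields a local solution of the resulting finite-dimensional system, which the a priori bound below extends to all of $[0,T]$. Pairing the projected equation with its solution $u_v^n$ and using $(B(u,u),u)=0$ from \eqref{1} gives $\d|u_v^n|^2 + 2\nu\|u_v^n\|^2\,\d t = 2(f,u_v^n)\,\d t + 2(\sigma(t,u_v^n)v,u_v^n)\,\d t$. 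The first term is handled by $2\|f\|_{V'}\|u_v^n\|$ and Young's inequality; the control term is the one genuinely new ingredient and is bounded, via Cauchy--Schwarz in $H_0$ and the growth condition A.2, by $2(\sigma(t,u_v^n)v,u_v^n)\le 2|\sigma(t,u_v^n)|_{\mathrm{L}_Q}|v|_0|u_v^n| \le \frac{\nu}{2}(1+\|u_v^n\|^2) + \frac{2K}{\nu}|v|_0^2|u_v^n|^2$. After absorbing the $\|u_v^n\|^2$ contributions into the left-hand side, Gronwall's inequality closes the estimate, the decisive point being that the factor $\frac{2K}{\nu}|v|_0^2$ is integrable precisely because $v\in\mathrm{L}^2(0,T;H_0)$. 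This yields a bound on $u_v^n$ in $C(0,T;H)\cap\mathrm{L}^2(0,T;V)$ uniform in $n$, of the same shape as \eqref{12}.

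With these uniform bounds, Banach--Alaoglu furnishes a subsequence along which $u_v^n\to u_v$ weak-star in $\mathrm{L}^\infty(0,T;H)$ and weakly in $\mathrm{L}^2(0,T;V)$, $F(u_v^n)\to F_0$ weakly in $\mathrm{L}^2(0,T;V')$ (since $F(u_v^n)$ is bounded there), and $\sigma(\cdot,u_v^n)\to S$ weakly in $\mathrm{L}^2(0,T;\mathrm{L}_Q)$ (by the linear growth of $\sigma$), so the limit obeys $\d u_v = (F_0 + f + Sv)\,\d t$ weakly in $V'$. The heart of the proof is to identify $F_0=F(u_v)$ and $S=\sigma(\cdot,u_v)$, which cannot be done termwise because $B$ and $\sigma$ are nonlinear. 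I would treat the full drift $G(u):=F(u)+\sigma(t,u)v$ as a single operator whose local monotonicity follows by combining \eqref{monotone2} with the Lipschitz bound A.3 on the control term, giving $(G(u_1)-G(u_2),u_1-u_2)\le -\frac{\nu}{4}\|u_1-u_2\|^2 + \big(\frac{r^4}{\nu^3}+\frac{L}{\nu}|v|_0^2\big)|u_1-u_2|^2$, and then run the exponentially weighted Minty--Browder argument of Theorem \ref{existence} with the weight $r(t)=\frac{2}{\nu^3}\int_0^t\|u_v(s)\|_{l^4}^4\,\d s + \frac{2L}{\nu}\int_0^t|v(s)|_0^2\,\d s$ (finite and bounded since $\|u_v\|_{l^4}^4\le C|u_v|^2\|u_v\|^2$ is integrable by Lemma \ref{La} and the energy bound, cf.\ \eqref{9}). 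Applying the chain rule to $e^{-r(t)}|u_v^n|^2$, passing to the $\liminf$ by lower semicontinuity of the weak limit, and testing against $u_v-\lambda w$ for arbitrary $w\in C(0,T;H)\cap\mathrm{L}^2(0,T;V)$ and letting $\lambda\to0$ identifies $F_0$; because $G$ bundles $F$ and $\sigma$ together, the same step delivers $S=\sigma(\cdot,u_v)$. Continuity $u_v\in C(0,T;H)$ then follows from the energy bound by a standard Lions--Magenes interpolation argument.

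For uniqueness, let $u_v$ and $\tilde u_v$ solve \eqref{18} in $X$ and set $w=u_v-\tilde u_v$, so $w(0)=0$ and, by Lemma \ref{Bprop2}, $w$ solves a closed equation in $V'$. The local monotonicity of $G$ gives $\d|w|^2 + \frac{\nu}{2}\|w\|^2\,\d t \le 2\big(\frac{r^4}{\nu^3}+\frac{L}{\nu}|v|_0^2\big)|w|^2\,\d t$, and multiplying by $e^{-r(t)}$, with $r(t)$ now built from $\|\tilde u_v\|_{l^4}^4$ and $|v|_0^2$, yields $\d\big(e^{-r(t)}|w|^2\big)\le 0$; since $w(0)=0$ this forces $w\equiv0$. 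I expect the main obstacle to be exactly the limit-identification step: only weak convergence is available for the quadratic term $B(u_v^n,u_v^n)$ and the nonlinear coefficient $\sigma(\cdot,u_v^n)$, so the passage to the limit must be routed through the weighted local-monotonicity argument, and the novelty relative to Theorem \ref{existence} is that the control drift $\sigma(t,u_v)v$ has to be incorporated consistently into both the weight $r(t)$ and the monotonicity inequality. Everything else is a routine deterministic analogue of the arguments in Theorems \ref{existence} and \ref{scontrol}.
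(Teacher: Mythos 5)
Your proposal is correct, and it supplies in full the argument that the paper compresses into a single sentence: the paper's entire proof of Theorem \ref{dcontrol} is the remark that the result ``can be considered as a particular case of the previous Theorem \ref{scontrol}, where the diffusion coefficient is absent.'' Your Galerkin--energy--local-monotonicity scheme, with the exponentially weighted Minty--Browder identification of the limit, is exactly the skeleton that remark implicitly appeals to (it is the scheme of Theorems \ref{existence} and \ref{scontrol}), so in spirit the two routes coincide; but your treatment of the control term is a genuine, and worthwhile, departure. In the energy estimate of Theorem \ref{scontrol} the paper bounds the control contribution by $\frac{4K}{\nu}\int_0^t |v(s)|_0^2\,|\uv(s)|^2\,\d s \leq \frac{4KM}{\nu}\sup_{0\leq t\leq T}|\uv(t)|^2$ and absorbs it into the left-hand side, which forces the smallness condition $4KM/\nu<1$ hidden in the constraint $\e < (1-4KM/\nu)^2/(2K^2)$; a literal specialization of that proof would therefore only cover controls with $\int_0^T|v(s)|_0^2\,\d s \leq M < \nu/(4K)$. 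By instead keeping $|v(s)|_0^2\,|u_v^n(s)|^2$ under the time integral and closing with Gronwall (legitimate precisely because $|v|_0^2\in\mathrm{L}^1(0,T)$), you obtain the a priori bound for \emph{every} $v\in\mathrm{L}^2(0,T;H_0)$, which is the generality in which Theorem \ref{dcontrol} is actually stated and later used in Theorem \ref{compact} --- and it is also how the paper itself proceeds in \eqref{24a}--\eqref{25}. Your remaining adjustments --- folding the control drift into the locally monotone operator $G(u)=F(u)+\sigma(t,u)v$ via A.3, enlarging the weight to $r(t)=\frac{2}{\nu^3}\int_0^t\|u_v\|_{l^4}^4\,\d s+\frac{2L}{\nu}\int_0^t|v|_0^2\,\d s$, and observing that in the deterministic setting the identification of the limit of $\sigma(\cdot,u_v^n)v$ comes together with that of the drift --- are consistent with the paper's method and make explicit the ``few modifications'' the paper leaves unstated.
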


\begin{proof}
This result can be considered as a particular case of the previous
Theorem \ref{scontrol}, where the diffusion coefficient is absent.
\end{proof}

Next we state a important lemma from Budhiraja and Dupuis
\cite{BD1}.

\begin{lemma}\label{BDL1}
Let $\{v_n\}$ be a sequence of elements from $\mathcal{A}_M$ for
some finite $M > 0$. Let $v_n$ converges in distribution to $v$ with
respect to the weak topology on $\mathrm{L}^2(0, T; H_0)$. Then
$\int_0^{\cdot}\ v_n(s)\d s$ converges in distribution as $C(0, T;
H)$- valued processes to $\int_0^{\cdot}\ v(s)\d s$ as $n \to
\infty$.
\end{lemma}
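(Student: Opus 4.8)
The plan is to reduce the probabilistic statement to a deterministic continuity property of the integration map and then invoke the continuous mapping theorem. Define $\Phi : S_M \to C(0, T; H)$ by $\Phi(v)(t) = \int_0^t v(s)\,\d s$, the Bochner integral being well defined since $S_M \subset \mathrm{L}^2(0,T; H_0) \subset \mathrm{L}^1(0,T; H_0)$. I would first show that $\Phi$ is continuous when $S_M$ carries the weak topology of $\mathrm{L}^2(0,T; H_0)$ and $C(0,T; H)$ carries the uniform topology. Since $S_M$ with the weak topology is Polish (hence metrizable, as noted in the excerpt), it suffices to verify sequential continuity: if $v_n \rightharpoonup v$ weakly in $\mathrm{L}^2(0,T;H_0)$ with all $v_n, v \in S_M$, then $\Phi(v_n) \to \Phi(v)$ in $C(0,T;H)$.

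The first step toward continuity is relative compactness of $\{\Phi(v_n)\}$ in $C(0,T;H)$ via Arzel\`a--Ascoli. The uniform bound $\int_0^T |v_n(s)|_0^2\,\d s \le M$ gives, by Cauchy--Schwarz, the equicontinuity estimate
\begin{equation*}
\Big|\int_s^t v_n(r)\,\d r\Big|_0 \le (t-s)^{1/2}\Big(\int_s^t |v_n(r)|_0^2\,\d r\Big)^{1/2} \le (M(t-s))^{1/2},
\end{equation*}
uniformly in $n$; since the embedding $H_0 \hookrightarrow H$ is continuous, the same modulus of continuity controls the family in the $H$-norm. Moreover, for each fixed $t$ the set $\{\Phi(v_n)(t)\}_n$ is bounded in $H_0$ by $(MT)^{1/2}$, and because the embedding $H_0 \hookrightarrow H$ is Hilbert--Schmidt, hence compact, this set is relatively compact in $H$. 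The vector-valued Arzel\`a--Ascoli theorem then yields relative compactness of $\{\Phi(v_n)\}$ in $C(0,T;H)$.

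It remains to identify the limit and conclude. Testing the weak convergence $v_n \rightharpoonup v$ against $\mathbf{1}_{[0,t]}(\cdot)\,g$ for $g \in H_0$ shows $(\Phi(v_n)(t), g)_0 \to (\Phi(v)(t), g)_0$, i.e. $\Phi(v_n)(t) \rightharpoonup \Phi(v)(t)$ weakly in $H_0$ and a fortiori weakly in $H$, for every $t$. If a subsequence of $\Phi(v_n)$ converges in $C(0,T;H)$ to some $w$, then $\Phi(v_n)(t) \to w(t)$ strongly in $H$ along it, and uniqueness of weak limits forces $w(t) = \Phi(v)(t)$. Since every convergent subsequence has the same limit $\Phi(v)$, the whole sequence converges, establishing continuity of $\Phi$. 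Finally, as $v_n \to v$ in distribution as $S_M$-valued random elements and $\Phi$ is continuous (hence Borel measurable), the continuous mapping theorem gives $\int_0^{\cdot} v_n(s)\,\d s \to \int_0^{\cdot} v(s)\,\d s$ in distribution in $C(0,T;H)$. The one genuinely delicate point is precisely the upgrade from weak $\mathrm{L}^2$-convergence, which by itself only delivers pointwise weak convergence of the time-integrals, to strong uniform convergence in $C(0,T;H)$; this is exactly where the compactness of the Hilbert--Schmidt embedding $H_0 \hookrightarrow H$, a consequence of $Q$ being trace class, is indispensable.
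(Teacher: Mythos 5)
Your proof is correct. The paper offers no argument for this lemma (it is quoted directly from Budhiraja and Dupuis \cite{BD1}), and your route --- continuity of the integration map from $S_M$ with the weak topology into $C(0,T;H)$, obtained from the Cauchy--Schwarz equicontinuity bound together with the compactness of the Hilbert--Schmidt embedding $H_0\hookrightarrow H$, followed by the continuous mapping theorem --- is exactly the standard proof of this fact, with the genuinely essential ingredient (the compact embedding, without which one only gets pointwise weak convergence of the integrals) correctly identified.
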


Now we are ready to check the Assumptions \ref{assum}.
\begin{theorem}[Compactness]\label{compact}
Let $M < \infty$ be a fixed positive number. Let
$$ K_M :=\Big\{u_v \in C(0, T; H) \cap \mathrm{L}^2(0, T; V); v\in
S_M\Big\},$$ where $u_v$ is the unique solution in $X=C(0, T; H)
\cap \mathrm{L}^2(0, T; V)$ of the deterministic controlled equation
\eqref{18}, with $u_v(0) = \xi\in H$. Then $K_M$ is compact in $X$.
\end{theorem}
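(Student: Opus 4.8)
The plan is to realize $K_M$ as the continuous image of a compact set. Recall that $S_M$ is a closed ball in the Hilbert space $\mathrm{L}^2([0,T];H_0)$, hence is compact and metrizable in the weak topology, and in particular weakly sequentially compact. Since $X$ is a metric space, it suffices to prove that the solution map $v \mapsto u_v$ is sequentially continuous from $S_M$, equipped with the weak topology, into $X$ with its strong topology: granting this, $K_M = \{u_v : v\in S_M\}$ is the continuous image of the weakly compact set $S_M$, hence compact in $X$. Thus I fix $\{v_n\}\subset S_M$ with $v_n \rightharpoonup v$ weakly (note $v\in S_M$ by weak closedness of the ball), write $u_n := u_{v_n}$ and $u := u_v$ for the corresponding solutions of \eqref{18}, and aim to show $u_n \to u$ in $X$.

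First I would record uniform a priori bounds. Arguing as in the energy estimate \eqref{12} of Theorem \ref{scontrol}, but with $\varepsilon = 0$ so that the stochastic integral is absent and $\int_0^T|v_n|_0^2\,\d t \leq M$, one obtains a constant $C$, independent of $n$, with
\begin{equation*}
\sup_{0\leq t\leq T} |u_n(t)|^2 + \int_0^T \|u_n(t)\|^2 \d t \leq C.
\end{equation*}
Using \eqref{18} together with Lemma \ref{Bprop1}(ii) (so $\|B(u_n,u_n)\|_{V^{\prime}}\leq C_3|u_n|^2$ is bounded in $\mathrm{L}^{\infty}([0,T];V^{\prime})$) and the growth bound (A.2) (so $\sigma(\cdot,u_n)v_n$ is bounded in $\mathrm{L}^1([0,T];H)$ by Cauchy--Schwarz and $v_n\in S_M$), I get that $\frac{\d u_n}{\d t}$ is bounded in $\mathrm{L}^1([0,T];V^{\prime})$. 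Since the inclusion $V\subset H$ is compact (the weights $k_n^2 = k_0^2 4^n\to\infty$), the Aubin--Lions--Simon compactness theorem yields a subsequence, still denoted $u_n$, and a limit such that $u_n\to u^{\star}$ strongly in $\mathrm{L}^2([0,T];H)$, weakly in $\mathrm{L}^2([0,T];V)$, and weak-$\star$ in $\mathrm{L}^{\infty}([0,T];H)$.

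Next I would identify the limit by passing to the limit in the variational form of \eqref{18}: the linear terms converge by weak convergence; the bilinear term converges using the strong $\mathrm{L}^2([0,T];H)$ convergence of $u_n$ together with the estimates of Lemma \ref{Bprop1}; and the control term $\sigma(\cdot,u_n)v_n$ converges to $\sigma(\cdot,u^{\star})v$ because $\sigma(\cdot,u_n)\to\sigma(\cdot,u^{\star})$ strongly (by the Lipschitz bound (A.3) and strong convergence of $u_n$) while $v_n\rightharpoonup v$ weakly, so the product of a strongly and a weakly convergent factor passes to the limit. Hence $u^{\star}$ solves \eqref{18} with control $v$, and the uniqueness in Theorem \ref{dcontrol} gives $u^{\star}=u_v=u$.

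Finally I would upgrade the convergence to the strong topology of $X$, which is the main obstacle. Setting $w_n := u_n - u$, which solves
\begin{equation*}
\frac{\d w_n}{\d t} + \nu A w_n + \big(B(u_n,u_n) - B(u,u)\big) = \sigma(\cdot,u_n)v_n - \sigma(\cdot,u)v,
\end{equation*}
I follow the device of Theorem \ref{existence}, applying the (here deterministic) It\^{o} Lemma to $e^{-r(t)}|w_n(t)|^2$ with $r(t) = \frac{2}{\nu^3}\int_0^t \|u(s)\|_{l^4}^4 \d s$; by Lemma \ref{Bprop2}, the cancellation \eqref{1}, and the estimate \eqref{monotone2}, the surviving nonlinear term $(B(w_n,u),w_n)$ is absorbed against $\nu\|w_n\|^2$ at the cost of the weight $r$. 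On the right-hand side I split $\sigma(\cdot,u_n)v_n - \sigma(\cdot,u)v = (\sigma(\cdot,u_n)-\sigma(\cdot,u))v_n + \sigma(\cdot,u)(v_n - v)$: the first piece is controlled via (A.3) and Young's inequality, again absorbed into the viscous term, while the crucial second piece gives
\begin{equation*}
\int_0^T e^{-r(t)}\big(\sigma(t,u(t))(v_n(t)-v(t)),\, w_n(t)\big)\d t \longrightarrow 0,
\end{equation*}
since $v_n - v \rightharpoonup 0$ weakly in $\mathrm{L}^2([0,T];H_0)$ while $e^{-r(\cdot)}\sigma(\cdot,u)^{\star}w_n \to 0$ strongly in $\mathrm{L}^2([0,T];H_0)$ (because $w_n\to 0$ strongly in $\mathrm{L}^2([0,T];H)$ by the previous step). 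A Gronwall argument in the weighted norm then forces $\sup_{0\leq t\leq T}|w_n(t)|^2 + \int_0^T\|w_n(t)\|^2\,\d t \to 0$, i.e.\ $u_n\to u$ in $X$. Since every subsequence of the original sequence admits a further subsequence converging to the same limit $u_v$, the full sequence converges, establishing sequential continuity of $v\mapsto u_v$ and hence the compactness of $K_M$.
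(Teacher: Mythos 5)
Your overall strategy coincides with the paper's: weak sequential compactness of $S_M$, reduction to sequential continuity of $v\mapsto u_v$ from $S_M$ with the weak topology into $X$, an energy identity for $w_n=u_{v_n}-u_v$, the splitting $\sigma(\cdot,u_{v_n})v_n-\sigma(\cdot,u_v)v=(\sigma(\cdot,u_{v_n})-\sigma(\cdot,u_v))v_n+\sigma(\cdot,u_v)(v_n-v)$ with the first piece absorbed into the viscous term via (A.3), and Gronwall. Where you genuinely depart from the paper is the cross term $\int_0^t\big(\sigma(s,u_v(s))(v_n(s)-v(s)),w_n(s)\big)\d s$: the paper sends it to zero by invoking Lemma \ref{BDL1} together with boundedness of $w_{v_n}$ in $C(0,T;H)$, whereas you first run an Aubin--Lions argument to get strong $\mathrm{L}^2(0,T;H)$ convergence of $u_{v_n}$ and then pair a strongly convergent factor against the weakly convergent $v_n-v$. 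That extra compactness step is a sensible way to make this point airtight, since one cannot in general pass to the limit in a pairing of a weakly convergent sequence against a merely bounded, $n$-dependent sequence.

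There is, however, a genuine gap in your identification step, and it propagates. You assert that $\sigma(\cdot,u_n)\to\sigma(\cdot,u^{\star})$ strongly ``by the Lipschitz bound (A.3) and strong convergence of $u_n$.'' But (A.3) is Lipschitz in the $V$-norm, $|\sigma(t,u)-\sigma(t,v)|^2_{\mathrm{L}_Q}\le L\|u-v\|^2$, while Aubin--Lions gives strong convergence of $u_n$ only in $\mathrm{L}^2(0,T;H)$ and merely \emph{weak} convergence in $\mathrm{L}^2(0,T;V)$; the claimed implication does not follow. Since your final Gronwall step explicitly uses ``$w_n\to0$ strongly in $\mathrm{L}^2(0,T;H)$ by the previous step,'' i.e.\ the identification $u^{\star}=u_v$, the whole argument currently rests on this unjustified claim. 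Two repairs are available: (i) use the diagonal structure of $\sigma$ (conditions a.1--a.2) and $\Tr Q<\infty$ --- each fixed component of $\sigma(\cdot,u_n)$ converges strongly because the corresponding component of $u_n$ does, and the tail of the $\mathrm{L}_Q$-norm is small uniformly in $n$ since $\sup_{j>N}\lambda_j\to0$ while $\int_0^T\|u_n\|^2\d t$ is bounded; or (ii) bypass the identification entirely by writing the cross term as $\int_0^t\big(v_n-v,\ \sigma(s,u_v(s))^{*}w_n(s)\big)_0\d s$, noting that $\sigma(\cdot,u_v)^{*}w_n\to\sigma(\cdot,u_v)^{*}(u^{\star}-u_v)$ strongly in $\mathrm{L}^2(0,T;H_0)$ by dominated convergence (uniform bound on $\sup_t|w_n(t)|$, with $K(1+\|u_v\|^2)\in\mathrm{L}^1(0,T)$), and pairing the fixed limit against $v_n-v\rightharpoonup0$; Gronwall then gives $u_n\to u_v$ in $X$ and identifies $u^{\star}=u_v$ a posteriori. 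Finally, Gronwall requires the cross term to vanish uniformly in $t\in[0,T]$, not only at $t=T$; your strong--weak pairing does deliver this (Cauchy--Schwarz controls the $n$-dependent remainder uniformly in $t$, and the fixed-limit part converges uniformly by equicontinuity), but it should be stated.
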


\begin{proof}
Let us consider a sequence $\{\n\}$ in $K_M$, where $\n$ corresponds
to the solution of \eqref{18} with control $v_n \in S_M$ in place of
$v$, i.e.
\begin{align}\label{19}
\d \n(t) + [\nu A\n(t) + B(\n(t), \n(t))]\d t=f(t)\d t + \sigma(t,
\n(t))v_n(t)\d t,
\end{align}
with $\n(0)=\xi\in H$. Then by weak compactness of $S_M$, there
exists a subsequence of $\{v_n\}$, still denoted by $\{v_n\}$, which
converges weakly to $v\in S_M$ in $\mathrm{L}^2(0, T; H_0)$.

We need to prove $\n \to u_v$ in $X$ as $n\to\infty$, or in other
words,
\begin{align}\label{20}
\sup_{0\leq t\leq T} |\n(t) - u_v(t)|^2 + \int_0^T \|\n(t) -
u_v(t)\|^2 \d t \to 0,
\end{align}
as $n\to \infty$.

According to the Theorem \ref{dcontrol}, $u_v$ is unique strong
solution in $X$ of the deterministic controlled equation \eqref{18}.
Hence it is obvious to note that, $u_v$ satisfies the following
a-priori estimate
\begin{align}\label{21}
\sup_{0\leq t\leq T} |u_v (t)|^2 + \int_0^T \|u_v (t)\|^2 \d t \leq
C,
\end{align}
where $$C = C\big(|\xi|^2, \int_0^T \|f\|^2_{V^{\prime}} \d t, \nu,
K, T, M\big)$$ is a positive constant.

For the proof, we refer the Theorem \ref{scontrol}, where the
stochastic version of the above a priori estimate has been worked
out.

Let $\m = \n - u_v$. Then $\m$ satisfies the following differential
equation
\begin{align}
&\d \m(t) +[\nu A\m(t)+B(\n(t), \n(t)) - B(u_v(t), u_v(t))]\d
t\nonumber\\
&\quad = [\sigma(t, \n(t)) v_n(t) - \sigma(t, u_v(t)) v(t)]\d t,
\end{align}
which yields
\begin{align}
&|\m(t)|^2 + 2\nu\int_0^t\|\m(s)\|^2 \d s + \nonumber\\
&\quad + 2\int_0^t \Big( B(\n(s),
\n(s)) - B(u_v(s), u_v(s)), \m(s)\Big)\d s\nonumber\\
&\quad\quad = 2\int_0^t \Big(\sigma(s, \n(s)) v_n(s) - \sigma(s,
u_v(s)) v(s), \m(s)\Big)\d s.\label{22}
\end{align}
First note that, from Lemma \ref{Bprop2}, $$B(\n, \n) - B(u_v,
u_v)=B(u_v, \m) + B(\m, u_v) + B(\m, \m).$$ Using the above
expression and the properties $(ii)$ and $(iv)$ of the bilinear
operator $B$ given in Lemma \ref{Bprop1}, one can find
\begin{align}
& 2\Big| \Big( B(\n(s), \n(s)) - B(u_v(s), u_v(s)), \m(s)\Big)\Big|\nonumber\\
&\quad =2 \Big|\Big( B(\m(s), u_v(s)), \m(s)\Big)\Big|\nonumber\\
&\quad \leq 2 \|B(\m(s), u_v(s))\|_{V^{\prime}} \|\m(s)\| \nonumber\\
&\quad \leq 2 |\m(s)|\ |u_v(s)|\ \|\m(s)\| \nonumber\\
&\quad \leq \frac{\nu}{2}\|\m(s)\|^2  +
\frac{2}{\nu} |\m(s)|^2 |u_v(s)|^2.\label{23}
\end{align}
Also notice that,
\begin{align}
&\Big|\int_0^t \Big(\sigma(s, \n(s)) v_n(s) - \sigma(s, u_v(s)) v(s), \m(s)\Big)\d s \Big|\nonumber\\
& \leq  \int_0^t  \Big|\Big(\big(\sigma(s, \n(s)) - \sigma(s,
u_v(s))\big)v_n(s),
\m(s)\Big)\Big| \d s +\nonumber\\
&\quad + \Big|\int_0^t \Big(\sigma(s, u_v(s)) (v_n(s) - v(s)),
\m(s)\Big)\d s\Big|\nonumber\\
 & \leq \sqrt{L} \int_0^t  \|\m(s)\| \ |\m(s)| \
|v_n(s)|_0 \d s +\nonumber\\
&\quad + \Big|\int_0^t \Big(\sigma(s, u_v(s)) (v_n(s) - v(s)),
\m(s)\Big)\d s\Big| \nonumber\\
&\leq \frac{\nu}{4}\int_0^t\|\m(s)\|^2 \d s+ \frac{L}{\nu}\int_0^t
|\m(s)|^2 \ |v_n(s)|_0^2 \d s+\nonumber\\
&\quad +  \sup_{0 \le t \le T}\Big|\int_0^t \Big(\sigma(s, u_v(s))
(v_n(s) - v(s)), \m(s)\Big)\d s\Big|.\label{24}
\end{align}
By the boundedness of $\{|\m(s)|^2\}$ in $C(0, T; H)$, and using the
Lemma \ref{BDL1}, the second integral on the right side of
\eqref{24} goes to $0$ as $n \to \infty$.  Therefore, given any
$\epsilon > 0$, there exists an integer $N$ large so that  for all
$n \ge N$,
\begin{equation} \sup_{0 \le t \le T}\Big|\int_0^t \Big(\sigma(s, u_v(s)) (v_n(s) - v(s)),
\m(s)\Big)ds\Big| < \epsilon/2.\label{244}
\end{equation}
Consider, $$C_{L, \nu} = \max\Big\{\frac{2}{\nu},
\frac{2L}{\nu}\Big\}.$$ Applying \eqref{23},  \eqref{24} and
\eqref{244} in \eqref{22}, one obtains for $n \ge N$,
\begin{equation}\label{24a}
|\m(t)|^2 + \nu\int_0^t\|\m(s)\|^2 \d s \leq C_{L, \nu}\int_0^t
|\m(s)|^2\big(|u_v(s)|^2 + |v_n(s)|^2_0 \big) \d s + \epsilon
\end{equation}
Hence by Gronwall's inequality,
\begin{equation}
\sup_{0\leq t\leq T} |\m(t)|^2 + \nu\int_0^T\|\m(t)\|^2 \d t \leq
\epsilon  e^{C_{L, \nu}\int_0^T \big(|u_v(t)|^2 + |v_n(t)|^2_0 +
1\big)\d t}.\label{25}
\end{equation}
The arbitrariness of $\epsilon$ finishes the proof.
\end{proof}

\begin{theorem}[Weak convergence]
For any $v\in S_M$, $0<M<\infty$, let\\
$\mathcal{G}^0(\int_0^{\cdot}v(s) \d s)$ be denoted by $u_v$, where
$u_v$ is a unique strong solution in $X=C(0, T; H) \cap
\mathrm{L}^2(0, T; V)$ of the deterministic control equation
\eqref{18}. Let the family $\mathcal{G}^{\e}$ be defined as in
Section 2. Let $\{v^{\e} : \e
>0\} \subset \mathcal{A}_M$ converges in distribution to $v$ with respect to
the weak topology on $\mathrm{L}^2(0, T; H_0)$. Then $
\mathcal{G}^{\e}(W(\cdot) +
\frac{1}{\sqrt{\e}}\int_0^{\cdot}v^{\e}(s) \d s)$ converges in
distribution to $\mathcal{G}^0(\int_0^{\cdot}v(s) \d s)$ in X.
\end{theorem}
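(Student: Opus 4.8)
The plan is to verify part~1 of Assumption~\ref{assum}; combined with the compactness established in Theorem~\ref{compact}, this supplies all the hypotheses of Theorem~\ref{main}. Since $\{v^{\e}\}\subset\mathcal{A}_M$ converges in distribution to $v$ in $S_M$ with the weak topology of $\mathrm{L}^2(0, T; H_0)$, and since Lemma~\ref{BDL1} gives that $\int_0^{\cdot}v^{\e}(s)\d s$ converges in distribution to $\int_0^{\cdot}v(s)\d s$ in $C(0, T; H)$, I would invoke Skorokhod's representation theorem to realize $(v^{\e}, W)$ on a single probability space so that $v^{\e}\to v$ almost surely in the weak topology while $W$ remains a Wiener process. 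On this space $\uve$ solves \eqref{11} and $u_v$ solves \eqref{18} (both being measurable functionals of the data by the uniqueness statements of Theorems~\ref{scontrol} and~\ref{dcontrol}, so that each retains its law), and it suffices to prove
\begin{equation*}
E\Big[\sup_{0\leq t\leq T}|\uve(t)-u_v(t)|^2 + \nu\int_0^T\|\uve(t)-u_v(t)\|^2\d t\Big]\longrightarrow 0,
\end{equation*}
because $\mathrm{L}^2$-convergence on the Skorokhod space yields convergence in probability in $X$, hence convergence in distribution back on the original space.

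Setting $\we := \uve - u_v$, I would subtract \eqref{18} from \eqref{11} and apply the It\^{o} Lemma to $|\we(t)|^2$, mirroring the deterministic estimate of Theorem~\ref{compact}. The nonlinear contribution is handled by Lemma~\ref{Bprop2}, writing $B(\uve, \uve)-B(u_v, u_v)=B(u_v, \we)+B(\we, u_v)+B(\we, \we)$; the first and third terms pair to zero against $\we$ by property~$(iv)$ of Lemma~\ref{Bprop1}, and the survivor $(B(\we, u_v), \we)$ is bounded by property~$(ii)$ and Young's inequality exactly as in \eqref{23}, producing $\frac{\nu}{2}\|\we\|^2 + \frac{2}{\nu}|\we|^2|u_v|^2$. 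Writing the control coefficient as $\sigma$ in accordance with the limit equation \eqref{18}, the control term splits as
\begin{equation*}
2\big((\sigma(t, \uve)-\sigma(t, u_v))v^{\e}, \we\big) + 2\big(\sigma(t, u_v)(v^{\e}-v), \we\big),
\end{equation*}
where the first piece is absorbed by condition (A.3) together with the $S_M$ bound $\int_0^T|v^{\e}|_0^2\d s\leq M$ as in \eqref{24}, while the It\^{o} correction $\e\,\Tr(\sigma(t, \uve) Q \sigma(t, \uve))$ is dominated by $\e K(1+\|\uve\|^2)$ through (A.2).

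Taking expectation annihilates the martingale $2\sqrt{\e}\int_0^t(\sigma(s, \uve(s)), \we(s))\d W(s)$, and Gronwall's inequality with the deterministic integrating factor $C_{L, \nu}\int_0^T(|u_v(t)|^2+|v^{\e}(t)|_0^2)\d t\leq C_{L,\nu}(CT+M)$ (finite by the a priori bound \eqref{12} of Theorem~\ref{scontrol}) then controls $E|\we(t)|^2 + E\int_0^T\nu\|\we\|^2\d t$; the supremum bound $E\sup_t|\we|^2$ is recovered by taking the supremum in time before expectation and absorbing the martingale via the Burkholder--Davis--Gundy inequality, precisely as in the proof of Theorem~\ref{scontrol}. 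The $\e$-order terms vanish because $E\int_0^T\|\uve\|^2\d t$ is bounded uniformly in $\e$, so the whole estimate reduces to showing that the cross term $\sup_{0\leq t\leq T}\big|\int_0^t(\sigma(s, u_v(s))(v^{\e}(s)-v(s)), \we(s))\d s\big|$ tends to zero.

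The hard part will be exactly this cross term, since $v^{\e}\to v$ only in the weak topology while it is paired with the $\e$-dependent process $\we$. I would treat it as in the compactness argument, equation \eqref{244}: the family $\{\we\}$ is bounded in $C(0, T; H)$ by the a priori estimate \eqref{12}, the coefficient $\sigma(\cdot, u_v(\cdot))$ is a fixed, $\e$-independent element once the realization of $v$ is frozen, and the almost sure weak convergence of $v^{\e}-v$ furnished by Skorokhod's theorem forces the functional to vanish pathwise; dominated convergence, the integrand being uniformly bounded, then upgrades this to vanishing in expectation. Combining the vanishing cross term with the vanishing $\e$-terms yields $E[\sup_t|\we|^2 + \nu\int_0^T\|\we\|^2\d t]\to 0$, whence $\uve\to u_v$ in probability in $X$ and therefore in distribution. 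This is precisely the assertion of Assumption~\ref{assum}(1), which with Theorem~\ref{compact} allows Theorem~\ref{main} to be invoked.
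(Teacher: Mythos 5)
Your proposal follows the paper's proof in all essentials: Skorokhod representation to upgrade the distributional convergence of $v^{\e}$ to almost sure weak convergence in $S_M$, an energy estimate for $\we=\uve-u_v$ using Lemma \ref{Bprop2} and properties (ii), (iv) of Lemma \ref{Bprop1} exactly as in \eqref{23}, absorption of the It\^{o} correction and the martingale term via (A.2) and Burkholder--Davis--Gundy, Gronwall with the integrating factor $\int_0^T(|u_v|^2+|v^{\e}|_0^2)\d t\leq CT+M$, and finally Markov's inequality to pass from $\mathrm{L}^1$-convergence to convergence in probability in $X$. The one place where you diverge is the cross term: the paper applies Young's inequality to decouple $\we$ from it, so that \eqref{28}--\eqref{29} contain the standalone quantity $\int_0^T|\sigma(t,u_v(t))(v^{\e}(t)-v(t))|^2\d t$, whereas you keep the pairing $\int_0^t(\sigma(s,u_v(s))(v^{\e}(s)-v(s)),\we(s))\d s$ and argue as in \eqref{244}. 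Your stated justification for that term --- that almost sure weak convergence of $v^{\e}-v$ ``forces the functional to vanish pathwise'' --- is not sufficient as written: the pairing tests $v^{\e}-v$ against $\sigma(s,u_v(s))^{*}\we(s)$, which itself moves with $\e$, and weak convergence plus mere boundedness of the test family does not yield convergence of the pairing. What rescues both your version and the paper's is the compactness of $\sigma(t,u_v(t))Q^{1/2}$ (it is Hilbert--Schmidt), which converts the weak convergence $v^{\e}\rightharpoonup v$ into strong convergence of $\sigma(\cdot,u_v(\cdot))(v^{\e}-v)$; that strong convergence paired with the uniform bound on $\we$ closes the argument. This step is left implicit in the paper as well (both in \eqref{244} and in the passage from \eqref{29} to the conclusion), so your proof is at the same level of rigor, but you should name the compactness of the noise coefficient as the operative ingredient rather than attributing the limit to weak convergence alone.
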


\begin{proof}
Let $\uve$ be the unique strong solution in $\mathrm{L}^2(\Omega;
X)$ of the equation
\begin{align}
&\d \uve(t) + [A\uve(t) + B(\uve(t), \uve(t))] \d t\nonumber\\
&\quad = [f(t)+\sigma(t, \uve(t))v^{\e}(t)]\d t + \sqrt{\e}\sigma(t,
\uve(t))\d W(t),\label{26}
\end{align}
with $\uve(0) = \xi\in H$.

Hence the following a priori estimate holds,
\begin{align}
E\Big[\sup_{0\leq t\leq T} |\uve(t)|^2 + \int_0^T \|\uve(t)\| \d
t\Big] \leq C\Big( |\xi|^2, \int_0^T \|f(t)\|_{V^{\prime}}^2 \d t,
K, T, M\Big).
\end{align}

Then there exist a Borel measurable function $\mathcal{G}^{\e}: C(0,
T; H) \to X$ satisfying the equality $\mathcal{G}^{\e}(W(\cdot) +
\frac{1}{\sqrt{\e}}\int_0^{\cdot}v^{\e}(s) \d s) = \uve.$ Since
$S_M$ is Polish, the Skorokhod representation theorem can be
introduced to construct processes $(\tilde{v}^{\e}, \tilde{v},
\tilde{W}^{\e})$ such that the  distribution of $(\tilde{v}^{\e},
\tilde{v}, \tilde{W}^{\e})$ is same as that of $(v^{\e}, v, W)$, and
$\tilde{v}^{\e} \to \tilde{v}$ a.s. in the weak topology of $S_M$.

Let $\w(t) = \uve(t)- u_v(t)$. We need to prove,
\begin{align}\label{27}
\sup_{0\leq t\leq T} |\w(t)|^2 + \int_0^T \|\w(t)\|^2 \d t \to 0
\end{align}
in probability as $\e \to 0$.

Notice that, applying similar estimate as in previous Theorem,
equation \eqref{24a}, one can get
\begin{align}
&|\w(t)|^2 + \nu\int_0^t \|\w(s)\|^2 \d s \nonumber\\
&\quad\leq 3 C_{L, \nu} \int_0^t |\w(s)|^2\big(|u_v(s)|^2 +
|v^{\e}(s)|_0^2 +1\big)\d s +\nonumber\\
&\quad\quad +\int_0^t |\sigma(s, u_v(s)) (v^{\e}(s) - v(s))|^2 \d s
+ \e K\int_0^t (1 + \|\uve(s)\|^2) \d s+\nonumber\\
&\quad\quad +2\sqrt{\e}\int_0^t \big(\sigma(s, \uve(s)),
\w(s)\big)\d W(s).\label{28}
\end{align}
We take supremum in $0\leq t\leq T$, then expectation on the above
inequality, and use similar estimate (with the help of
Burkholder-Davis-Gundy inequality) as in \eqref{4} on the last term
of the right hand side to get,
\begin{align}
&E\Big[\sup_{0\leq t\leq
T}|\w(t)|^2 + \nu\int_0^T \|\w(t)\|^2 \d t\Big] \nonumber\\
&\quad\leq 3C_{L, \nu} E\Big[\int_0^T \sup_{0\leq t\leq
T}|\w(t)|^2\big(|u_v(t)|^2 + |v^{\e}(t)|_0^2 +1\big)\d t\Big] +\nonumber\\
&\quad\quad +\int_0^T |\sigma(t, u_v(t)) (v^{\e}(t) - v(t))|^2 \d t
+ \e K(C+T)+\nonumber\\
&\quad\quad + \sqrt{2\e}K\Big(C+T+ E\Big[\sup_{0\leq t\leq
T}|\w(t)|^2\Big]\Big).
\end{align}
Assume that $\e <\frac{1}{2K^2}$. Then the Gronwall inequality
yields
\begin{align}
&E\Big[\sup_{0\leq t\leq
T}|\w(t)|^2 + \nu\int_0^T \|\w(t)\|^2 \d t\Big] \nonumber\\
&\quad\leq \Big((\e + \sqrt{2\e})K(C+T) + \int_0^T |\sigma(t,
u_v(t)) (v^{\e}(t) - v(t))|^2 \d t\Big)\times \nonumber\\
&\quad\quad\times e^{3C_{L, \nu}\int_0^T\big(|u_v(t)|^2 +
|v^{\e}(t)|_0^2 +1\big)\d t}.\label{29}
\end{align}
Since $v^{\e} \to v$ a.s. in the weak topology of $S_M$, it is clear
from the equation \eqref{29} that as $\e \to 0$,
\begin{align*}
E\Big[\sup_{0\leq t\leq T}|\w(t)|^2 + \nu\int_0^T \|\w(t)\|^2 \d
t\Big] \to 0.
\end{align*}
Let $\delta > 0$ be any arbitrary number. Then by Markov's inequality
\begin{align*}
&P\big\{\sup_{0\leq t\leq T}|\w(t)|^2 + \nu\int_0^T \|\w(t)\|^2 \d t
\geq \delta\big\}\nonumber\\
&\quad \leq \frac{1}{\delta} E\big[\sup_{0\leq t\leq T}|\w(t)|^2 +
\nu\int_0^T \|\w(t)\|^2 \d t\big] \to 0\ \text{as}\ \e \to 0.
\end{align*}
Thus
\begin{align*}
\sup_{0\leq t\leq T}|\uve(t)-u_v(t)|^2 + \nu\int_0^T
\|\uve(t)-u_v(t)\|^2 \d t \to 0
\end{align*}
in probability as $\e\to 0$. The proof is now complete.
\end{proof}

\begin{remark}
Sabra shell model of turbulence is the other well accepted model in the literature, and the fundamental difference with the GOY model lies in the number of complex conjugation operators used in the nonlinear terms which are
responsible for differences in the phase symmetries of the two
models, and as a consequence, Sabra shell  model exhibits
shorter-ranged correlations than the GOY model (see L'vov et. al.
\cite{Lv}). The equations of motion of the stochastic Sabra shell model have the following form
\begin{align*}
\frac{\d u_n}{\d t} + \nu k_n^2 u_n &+ i\big(a k_{n+1} u_{n+2}u\s_{n+1} + b k_n u_{n+1}u\s_{n-1} - \nonumber\\ & -ck_{n-1} u_{n-1}u_{n-2}\big)
= f_n + \sigma_n(t, u_n)\frac{\d w_n(t)}{\d t}, \quad\text{for}\ n= 1, 2, \ldots,
\end{align*}
along with the boundary conditions
\begin{equation*}
u_{-1} = u_0 = 0.
\end{equation*}
Under the same assumptions on the noise and noise coefficient given in Chapter $3$, and under the same functional setting, the existence and uniqueness of the strong solution can be established in $\mathrm{L}^2(\Omega; C(0, T; H)\cap\mathrm{L}^2(0, T; V))$. Moreover, by proceeding in the similar fashion as in Chapter $4$, one can easily verify the key estimates and prove the large deviation principle for the solution of the stochastic Sabra model in the Polish space $C(0, T; H)\cap\mathrm{L}^2(0, T; V)$.
\end{remark}

\medskip\noindent
{\bf Acknowledgements:} The first author would like to thank
Institut Mittag-Leffler (The Royal Swedish Academy of Sciences) for
their warm hospitality and support during the visit in
September--October 2007, where this work was initiated. He also
wants to thank Max-Planck Institute for Mathematics in the Sciences
in Leipzig, Germany for providing support and excellent research
environment which helped to complete this work. The second author
would like to thank the Army Research Office, Probability and
Statistics Program for their grant (DODARMY$41712$).

\end{document}